\newtheorem{theorem}{Theorem}
\newtheorem{prop}[theorem]{Proposition}
\newtheorem{lemma}[theorem]{Lemma}
\theoremstyle{definition}
\theoremstyle{remark}
\newtheorem{remark}[theorem]{Remark}
\newcommand{\p}{\mathbb{P}}
\newcommand{\e}{\mathbb{E}}
\newcommand{\reals}{\mathbb{R}}
\newcommand{\ind}{\mathbf{1}}
\newcommand{\me}{\mathrm{e}}
\newcommand{\md}{\mathrm{d}}
\newcommand{\drift}{c}
\def\beq{\begin{eqnarray}} \def\eeq{\end{eqnarray}}
\def\al*#1{\begin{align*}#1\end{align*}}
\def\ga*#1{\begin{gather*}#1\end{gather*}}
\def\alat*#1#2{\begin{alignat*}{#1}#2\end{alignat*}}
\def\bea{\begin{eqnarray*}}
\def\eea{\end{eqnarray*}}
\def\ml*#1{\begin{multline*}#1\end{multline*}}
\begin{document}
\title[]{On the longest/shortest negative excursion of a Lévy risk process and related quantities}

\author{M. A. Lkabous}
\email{M.A.Lkabous@soton.ac.uk}
\address{School of Mathematical Sciences, University of
Southampton, Highfield, Southampton SO17 1BJ, UK}

\author{Z. Palmowski}
\email{zbigniew.palmowski@gmail.com}
\address{Faculty of Pure and Applied Mathematics,
Wroc\l aw University of Science and Technology,
Wyb. Wyspia\'nskiego 27, 50-370 Wroc\l aw, Poland}

\date{\today }

\begin{abstract}
In this paper, we analyze some distributions involving the longest and shortest negative excursions of spectrally negative L\'evy processes using the binomial expansion approach. More specifically, we study the distributions of such excursions and related quantities such as the joint distribution of the shortest and longest negative excursion and their difference (also known as the range) over a random and infinite horizon time. Our results are applied to address new Parisian ruin problems, stochastic ordering and the number near-maximum distress periods showing the superiority  of the binomial expansion approach for such cases.
\end{abstract}

\keywords{Negative excursions, time in the red, L\'evy risk process, Parisian ruin}
\date{\today}
\maketitle

\section{Introduction }

One of the main purposes of ruin theory is to model the surplus of an insurance company with a special focus on the risks inherent to financial distress periods which begin with a capital shortfall. For instance, the duration of such periods (or negative excursions) have been investigated in several papers such as \cite{dosreis1993}, \cite{landriaultetal2020} and \cites{landriaultetal2011}. Monitoring such financial distress periods is also of practical importance since, under the U.S. bankruptcy code, a reorganization is introduced to the firm when it defaults instead of immediate liquidation (see François and Morellec \cite{FM2004}, Galai et al. \cite{galaietal2003} and Li et al. \cite{lietal2014}). In practice, we should be interested in the longest negative duration during a given period as it describes the worst situation an insurance company would experience before recovery. For a risk process  $X$, we denote the duration of the ongoing negative excursion up to time $t$ by
\begin{equation}
U_t=t-g_t, \quad t\geq 0, \label{U}
\end{equation}
where $g_t =\sup \left\{ 0\leq s\leq t : X_s \geq 0 \right\}$ is the last time before $t$ when the process was non-negative, with the convention $\sup \emptyset=0$. This paper continues the analysis on the process $U$ by Landriault et al. \cite{landriault2022bridging} in which last passage times of $U$ have been studied as well as the joint distribution of $( U_{\mathrm{e}_{q}},X_{ \mathrm{e}_{q}})$  where $\mathrm{e}_{q }$ denotes an exponential random variable with rate $q >0$, and by Wang et al. \cite{wang2023refracted} in which a refracted L\'evy risk model with delayed dividend pullbacks is studied.  In this paper, we are interested in the running supremum of $U$ up to time $t$
\begin{equation}
 \bar{U}_t=\sup_{s\leq t} U_s, \quad t\geq 0, \label{supU}
\end{equation}
and in $\bar{U}_\infty=\sup_{s\geq 0} U_s$ being the ultimate supremum. First, we are interested in finding the distribution of $\bar{U}_\infty$, $\bar{U}_{\me_q}$, $\bar{U}_{\tau_b^+}$ and $\bar{U}_{T_b^+}$, where $\mathrm{e}_{q }$ denotes an exponential random variable with rate $q >0$ that is independent of the process $X$, $\tau_b^+$ is the first passage time  and $T_b^+$ is the first passage time under Poisson observations. We use a unified approach based on the binomial expansion approach as in \cite{Yin2014ExactJL}, \cite{li_yin_zhou_2017} and \cite{dosreis1993} among others. Our results are closely related with a set of papers dealing with the so-called red period (see for example \cite{landriaultetal2020}) and Parisian ruin time (see for example \cite{loeffenetal2013}, \cite{guerin_renaud_2015} and \cite{Yin2014ExactJL}). More specifically, we provide a link between our results and the Parisian risk model of Loeffen et al. \cite{loeffenetal2017}. Moreover, we study the distribution of the shortest negative excursion $\underline{U}_{\infty }=\inf_{s\leq t} U_s$. For risk processes with bounded variation, we study new quantities related to the shortest, longest, total negative duration as well as their range, that is the difference between the longest and shortest negative duration. The main contribution of our paper lies in linking the maximal/minimum length of the negative excursions up to some random time with geometric trials also known as the binomial expansion approach. This approach allows to derive some non-trivial identities related to new Parisian ruin models and gives stochastic ordering for the Parisian ruin probabilities completing the existing literature on ordering ruin probabilities in \cite{trufinetal2011} and \cite{lkabous-renaud2018risks}. Finally, making use of the results of Pakes and Yun \cite{li2001number}, we study the number of near-maximum distress periods for the Cram\'{e}r-Lundberg process with exponential jumps.

The rest of the paper is organized as follows. In Section \ref{prelim}, we present the necessary background material on spectrally negative L\'evy processes together with some relevant ruin-related quantities. The main technical analysis is carried out in Section \ref{mainresults} in which we provide the main results of this paper involving formulas for the distribution of the longest and shortest negative duration and the range. Finally, in Section \ref{Parisian} we show how our approach can be applied to solve new Parisian risk models, stochastic ordering of Parisian ruin probabilities and the number of near-maximum negative excursions.

\section{Preliminaries on spectrally negative L\'evy processes}\label{prelim}

First, we present the necessary background material on spectrally negative L\'{e}vy processes. A L\'{e}vy insurance risk process $X$ is a process
with stationary and independent increments and no positive jumps. To avoid
trivialities, we exclude the case where $X$ has monotone paths. Throughout, we will use the standard notation: the law of $X$ when
starting from $X_0 = x$ is denoted by $\mathbb{P}_x$ and the corresponding
expectation by $\mathbb{E}_x$. We write $\mathbb{P}$ and $\mathbb{E}$ when $x=0$. As the L\'{e}%
vy process $X$ has no positive jumps, its Laplace transform is finite: for all $%
\lambda, t \geq 0$,
\begin{equation*}
\mathbb{E} \left[ \mathrm{e}^{\lambda X_t} \right] = \mathrm{e}^{t
	\psi(\lambda)} ,
\end{equation*}
where
\begin{equation*}
\psi(\lambda) = \gamma \lambda + \frac{1}{2} \sigma^2 \lambda^2 +
\int_{(-\infty,0)} \left( \mathrm{e}^{\lambda z} - 1 - \lambda z \mathbf{1}_{\{  z>-1\} } \right) \Pi(\mathrm{d}z) ,
\end{equation*}
for $\gamma \in \mathbb{R}$ and $\sigma \geq 0$, and where $\Pi$ is a $%
\sigma $-finite measure on $(0,\infty)$ called the L\'{e}vy measure of $X$
such that
\begin{equation*}
\int_{(-\infty,0)} (1 \wedge z^2) \Pi(\mathrm{d}z) < \infty .
\end{equation*}

By $\Phi_{q} = \sup \{ \lambda \geq 0 \mid \psi(\lambda)
= q\}$  we denote the right-inverse of $\psi$.
For $q \geq 0$, the first $q$-scale function of the process $X$ is defined as
the continuous function on $[0,\infty)$ with Laplace transform
\begin{equation}  \label{def_scale}
\int_0^{\infty} \mathrm{e}^{- \lambda y} W_{q} (y) \mathrm{d}y = \frac{1}{%
	\psi(\lambda)-q} , \quad \text{for $\lambda > \Phi_q$.}
\end{equation}
We extend $W_{q}$ to the whole real line by setting $W_{q}(x)=0$ for $x<0$.
We write $W = W_{0}$. We also define the generalized second scale function $Z_{q}(x,\theta )$ by
\begin{equation}  \label{eq:zqscale2}
Z_{q}(x,\theta )=\mathrm{e}^{\theta x}\left( 1-(\psi (\theta )-q) \int_{0}^{x}%
\mathrm{e}^{-\theta y}W_{q}(y)\mathrm{d}y\right) ,\quad x\geq 0,
\end{equation}%
and $Z_{q}(x,\theta )=\mathrm{e}^{\theta x}$ for $x<0$.\\
Throughout this paper, we assume that the first scale function $W_q(x)$ is
continuously differentiable. 

\subsection{Fluctuation identities} 
We define of the first-passage time of $X$ above a level $b \in \reals$ as
\begin{align*}
\tau_b^+ =
\inf\{t>0 \colon X_t > b\} ,
\end{align*}
with the convention $\inf \emptyset=\infty$. For $q \geq 0$ and $x \leq b$ , we have
\begin{equation}\label{Laptrans}
\mathbb{E}_{x} \left[ \mathrm{e}^{-q \tau_b^+} \mathbf{1}_{\{ \tau_b^{+} < \infty \}} \right] = \me^{-\Phi_q (b-x)} .
\end{equation}
From Loeffen et al. \cite{loeffenetal2017}, we recall the following function $\Lambda ^{\left( q\right) } (x,r) $
\begin{equation*}
\Lambda ^{\left( q\right) }\left( x,r\right) =\int_{0}^{\infty }W_{q}\left(x+u\right) \frac{u}{r}\mathbb{P}\left( X_{r}\in \mathrm{d}u\right) ,
\end{equation*}
and we write $\Lambda =\Lambda ^{(0)}$ when $q=0$. From Loeffen et al. \cite{loeffenetal2013}, for $x\leq 0$, we have
\begin{equation}\label{nowy}
\mathbb{P}_x(\tau_0^+<r)=\Lambda\left( x,r\right).
\end{equation}
The Laplace transform of the \textit{classical} ruin time $\tau_0^- =\inf\{t>0 \colon X_t <0\} $ is given by
\begin{equation}  \label{E:classicalruinprobaX1}
\e_x \left[\me^{-q \tau_0^-} \ind_{\left\lbrace \tau_0^- < \infty \right\rbrace }\right] = Z_q (x)-\dfrac{q}{\Phi_q} W_q (x),
\end{equation}
for $q>0$. 
In particular, for $\e \left[ X_1 \right]>0$, the \textit{classical} probability of ruin is given by
\begin{equation}  \label{E:classicalruinprobaX}
\p_x \left( \tau_0^- < \infty \right) = 1 - \e \left[ X_1 \right]W(x).
\end{equation}
From \cite[Lemma 2.2]{loeffenetal2014} we know that for $p \geq 0$ and $z\in \reals$  we have
\begin{equation}\label{id1}
\mathbb{E}_{x}\left[ \mathrm{e}^{-q \tau
_{0}^{-}}W_{q }\left( X_{\tau _{0}^{-}}+z\right) \mathbf{1}_{\left\{
\tau _{0}^{-}<\infty \right\} }\right] =W_{q }\left( x+z\right) -W_{q}(x)\me^{\Phi _{q}z}.
\end{equation}
\subsection{Cram\'{e}r-Lundberg process with exponential jumps}
Our results will be illustrated by numerical examples for the special case of Cramér-Lundberg risk process $X = \{X_t, t\ge 0\}$ with exponential claims, that is
\begin{equation}\label{CL}
X_t = X_0+ \drift t - A_t =X_0+ \drift t - \sum_{i=1}^{N_t} C_i ,
\end{equation}
where $c>0$ is the fixed premium rate, $N=\{N_t, t\ge 0\}$ is a Poisson process with rate $\eta>0$ and $\{C_i\}_{i \in \mathbb{N}^+}$ is a sequence of i.i.d. random variables. The Poisson process is independent of $\{C_i\}_{i\in \mathbb{N}^+}$.
We assume that $\{C_{1},C_{2},\dots \}$ are exponentially distributed
random variables with a parameter $\alpha $ and that the net-profit condition holds, that is, that
\[c-\eta /\alpha >0.\]
Then the scale function of $X$ is known to be
\begin{align}\label{scalf}
W(x)&=\frac{1}{c-\eta /\alpha }\left( 1-\frac{\eta }{c\alpha }\mathrm{e}^{(
\frac{\eta }{c}-\alpha )x}\right).
\end{align}
One can observe that a sum of i.i.d. exponential random variables has the same law as a gamma random variable. Since $C_i$ and $N_t$ are independent, from eq. $(17)$ in \cite{gomezetal2011} we have
\begin{eqnarray*}
\mathbb{P} \left( \sum_{i=1}^{N_r}C_i\in\mathrm{d}z \right)= \mathrm{e}^{-\eta r} \left( \delta_0(\mathrm{d}z) + \me^{-\alpha z}\sqrt{\dfrac{r\eta \alpha}{z}}I_1 (2\sqrt{r\eta \alpha z}) \mathrm{d}z  \right) ,
\end{eqnarray*}
where $I_{1}$ is the modified Bessel function of the first kind of order $1$ and  $\delta_0(\mathrm{d}y)$ is a Dirac mass at $0$. Consequently,
the law of $X$ is given by
\begin{align*}
\p\left( X_r \in \md z\right) = \me^{-\eta r}\left( \delta_0 (cr-\md z) + \me^{-\alpha (cr-z)}\sqrt{\dfrac{r\eta \alpha}{cr-z}}I_1 (2\sqrt{r\eta \alpha (cr-z)}) \mathrm{d}z\right) , \quad z \le cr
\end{align*}
and hence
\begin{equation*}
\int_{0}^{\infty }\frac{z}{r}\mathbb{P}\left( X_{r}\in \mathrm{d}z\right) =%
\mathrm{e}^{-\eta r}\left( c+\int_{0}^{cr}z\mathrm{e}^{-\alpha (cr-z)}\sqrt{%
\dfrac{\eta \alpha }{\left( cr-z\right) r}}I_{1}(2\sqrt{r\eta \alpha (cr-z)})%
\mathrm{d}z\right).
\end{equation*}
Further, using the same approach as in \cite[p. 9]{loeffenetal2013}, we derive
\begin{equation}\label{int1}
 \frac{\eta}{c\alpha}\int_0^\infty \mathrm{e}^{(\frac{\eta}{c}-\alpha)z}z\mathbb{P}(X_r \in \mathrm{d}z) = \int_0^\infty z \mathbb{P}(X_r\in\mathrm{d}z)- (c-\eta/\alpha )r.
\end{equation}
Thus,
\begin{eqnarray*}
\Lambda \left( x,r\right)  &=&\frac{1}{c-\eta /\alpha }\int_{0}^{\infty
}\left( 1-\frac{\eta }{c\alpha }e^{\left( \eta /c-\alpha \right) \left(
x+z\right) }\right) \frac{z}{r}\mathbb{P}\left( X_{r}\in dz\right)  \\
&=&\frac{1}{c-\eta /\alpha }\int_{0}^{\infty }\frac{z}{r}\mathbb{P}\left(
X_{r}\in dz\right) -\frac{\eta e^{\left( \eta /c-\alpha \right) x}}{c\alpha
\left( c-\eta /\alpha \right) }\int_{0}^{\infty }e^{\left( \eta /c-\alpha
\right) z}\frac{z}{r}\mathbb{P}\left( X_{r}\in dz\right)  \\
&=&\frac{1}{c-\eta /\alpha }\int_{0}^{\infty }\frac{z}{r}\mathbb{P}\left(
X_{r}\in dz\right) -\frac{e^{\left( \eta /c-\alpha \right) x}}{\left( c-\eta
/\alpha \right) }\int_{0}^{\infty }\frac{z}{r}\mathbb{P}\left( X_{r}\in
dz\right) +e^{\left( \eta /c-\alpha \right) x} \\
&=&\frac{1}{c-\eta /\alpha }\int_{0}^{\infty }\frac{z}{r}\mathbb{P}\left(
X_{r}\in dz\right) \left( 1-e^{\left( \eta /c-\alpha \right) x}\right)
+e^{\left( \eta /c-\alpha \right) x},
\end{eqnarray*}
where in the last equality, we used equation \eqref{int1}.\\
We refer the reader to \cite{kyprianou2014} for more details on spectrally negative L\'{e}vy processes and fluctuation identities.
\section{Main results}\label{mainresults}
\subsection{Distribution of longest negative duration}
In this subsection, we present our main results for the distributions of the longest negative duration $\bar{U}_\infty$, $\bar{U}_{\tau_b^+}$, $\bar{U}_{T_b^+}$ and $\bar{U}_{\me_q}$  where $\mathrm{e}_{q }$
denotes an exponential random variable with rate $q >0$ (independent of $X$) and $T_b^+$ is the first passage time under Poisson observations formally defined in \eqref{poissfirspassage}.\\
We start our analysis with the distribution of $\bar{U}_{\infty}$,
that is, the longest negative excursion observed.
\begin{prop}\label{thm1}
For $r>0$, $x\in \mathbb{R}$ and $\e[X_1]>0$, we have
\begin{equation}\label{thm11}
\mathbb{P}_{x}\left( \bar{U}_{\infty }<r\right)=
\mathbb{E}\left[ X_{1}\right]\frac{\Lambda \left( x,r\right) }{\Upsilon
\left( r\right) },
\end{equation}
where
\begin{equation}\label{Upsilon}
\Upsilon \left( r\right) =\int_{0}^{\infty }\dfrac{z}{r}\mathbb{P}\left(
X_{r}\in \mathrm{d}z\right).
\end{equation}
\end{prop}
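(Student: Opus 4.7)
The plan is to derive a functional equation for $Q(x) := \mathbb{P}_x(\bar{U}_\infty < r)$ via the strong Markov property, parametrised by the single unknown constant $Q(0)$, and then to identify $Q(0)$ through the binomial/geometric decomposition of the negative excursions from $0$. Since $\mathbb{E}[X_1] > 0$, the process drifts to $+\infty$, so $\Phi_0 = 0$ and every negative excursion has a.s.\ finite length.

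For $y \leq 0$, the strong Markov property at $\tau_0^+$ combined with spectral negativity (the process exits $(-\infty,0)$ continuously at $0$) and \eqref{nowy} yields
\[
Q(y) = \mathbb{P}_y(\tau_0^+ < r)\, Q(0) = \Lambda(y, r)\, Q(0).
\]
For $x \geq 0$, the strong Markov property at the classical ruin time $\tau_0^-$ together with \eqref{E:classicalruinprobaX} gives
\[
Q(x) = \mathbb{E}[X_1]\, W(x) + Q(0)\, \mathbb{E}_x\!\left[\mathbf{1}_{\{\tau_0^- < \infty\}}\, \Lambda(X_{\tau_0^-}, r)\right].
\]
I would then insert the definition of $\Lambda$, swap expectation and integration via Fubini, and invoke identity \eqref{id1} with $q = 0$ (so $\me^{\Phi_0 u} = 1$) to collapse the remaining expectation to $\Lambda(x,r) - W(x)\,\Upsilon(r)$. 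This produces the master equation
\[
Q(x) = \mathbb{E}[X_1]\, W(x) + Q(0) \bigl[\Lambda(x, r) - W(x)\, \Upsilon(r)\bigr], \qquad x \geq 0.
\]

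To pin down $Q(0)$, I would use the binomial expansion: since $X$ drifts to $+\infty$, from $0$ the number of negative excursions is (in the bounded variation case) a geometric random variable $N$ with $\mathbb{P}_0(N = n) = (1 - \mathbb{E}[X_1]\,W(0))^n\, \mathbb{E}[X_1]\,W(0)$, and the excursion lengths are i.i.d.\ by strong Markov. The key ancillary identity is $\Lambda(0, r) = 1$, which holds because $0$ is regular for $(0, \infty)$ for $X$ (in the bounded variation case the positive drift carries $X$ immediately upwards from $0$; in the unbounded variation case regularity at $0$ holds in general), so $\tau_0^+ = 0$ $\mathbb{P}_0$-a.s. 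Summing the geometric series -- equivalently, setting $x = 0$ in the master equation -- gives $Q(0)\,W(0)\,\Upsilon(r) = \mathbb{E}[X_1]\,W(0)$, hence $Q(0) = \mathbb{E}[X_1]/\Upsilon(r)$ in the bounded variation case. Substituting this back into the master equation, the $\mathbb{E}[X_1]\,W(x)$ terms cancel against $-Q(0)\,W(x)\,\Upsilon(r)$, leaving $Q(x) = \mathbb{E}[X_1]\,\Lambda(x, r)/\Upsilon(r)$, and the formula extends to $y < 0$ through $Q(y) = \Lambda(y, r)\,Q(0)$.

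The main obstacle is the unbounded variation case $W(0) = 0$: the master equation at $x = 0$ becomes the trivial $Q(0) = Q(0)$, and the discrete geometric decomposition degenerates since excursions from $0$ form an uncountable Poisson point process under the It\^o excursion measure. I would address this either by approximating $X$ by a sequence of bounded variation spectrally negative processes and passing to the limit in \eqref{thm11} (using continuity of $W$, $\Lambda$ and $\Upsilon$ in the characteristic triplet), or by identifying $\{\bar{U}_\infty < r\}$ with the Parisian-survival event of delay $r$ and directly invoking the formula of Loeffen, Czarna and Palmowski \cite{loeffenetal2013}, which coincides with \eqref{thm11}.
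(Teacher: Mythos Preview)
Your argument is correct and, for the bounded-variation case, rests on exactly the same ingredients as the paper: the identity $\mathbb{E}_x[\Lambda(X_{\tau_0^-},r)\mathbf{1}_{\{\tau_0^-<\infty\}}]=\Lambda(x,r)-W(x)\Upsilon(r)$, the fact $\Lambda(0,r)=1$, and the geometric structure of excursions from $0$. The difference is purely organisational: the paper conditions explicitly on $\{N_\infty=n\}$, writes $\mathbb{P}_x(D_{(n)}<r\mid N_\infty=n)$ as a product over the $D_k$, and sums the resulting geometric series, whereas you package the same recursion into the single renewal equation $Q(x)=\mathbb{E}[X_1]W(x)+Q(0)[\Lambda(x,r)-W(x)\Upsilon(r)]$ and solve for $Q(0)$ by evaluating at $x=0$. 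Your presentation is more compact; the paper's makes the ``binomial expansion'' viewpoint (which is the unifying theme of the whole article) more visible.

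Where the two proofs genuinely diverge is the unbounded-variation case. The paper stays with the same process $X$ but shifts the barrier: it defines excursions below $-\epsilon$, repeats the bounded-variation computation with $W(\epsilon)$ in place of $W(0)$, and lets $\epsilon\downarrow 0$ using the limit $\lim_{\epsilon\to 0}(1-\Lambda^{(\epsilon)}(0,r))/W(\epsilon)=0$ from \cite{loeffenetal2013}. This is entirely self-contained. Your first suggestion (approximating $X$ by bounded-variation spectrally negative L\'evy processes) would require establishing joint continuity of $\Lambda$ and $\Upsilon$ in the characteristic triplet, which is plausible but not supplied; your second suggestion (invoking the Parisian-survival formula of \cite{loeffenetal2013}) is logically valid but somewhat circular here, since one of the points of the proposition is precisely to recover that formula by the binomial-expansion method.
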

\begin{proof}
First, we assume that $X$ has bounded variation. As described in \cite{Yin2014ExactJL}, for $k \geq 2$, we define the time when $X$ reaches (resp., leaves) level $0$ from below (resp., above) for the $k^{\rm th}$ time by
$$L_k = \inf \left\lbrace t \geq R_{k-1}, \text{ } X_t < 0\right\rbrace ,$$
and
 $$ R_k =  \inf \left\lbrace t \geq L_k , \text{ } X_t = 0\right\rbrace ,$$
where $L_1 = \tau ^{-}_0$ and $R_1=\inf \left\lbrace t \geq L_1 , \textbf{ } X_t = 0\right\rbrace$. Let $D_{k}=R_{k}-L_{k}$ be the duration of the $k^{\rm th}$ negative excursion. Hence, we can then express $\bar{U}_{\infty }$ as
\begin{equation}\label{defi1}
\bar{U}_{\infty }=D_{(N_\infty)},
\end{equation}
where $D_{(n)}=\max \{\,D_{1},\ldots ,D_{n}\,\}$ is the $n$th order statistic, and
\begin{equation}\label{defN}
N_\infty  =\sup \left\{ k:\text{ }R_{k}<\infty \right\}.
\end{equation}
Note that $N_\infty$ has the following distribution: $\mathbb{P}_x\left( N_\infty  =0\right) =%
\mathbb{P}_x\left( \tau _{0}^{-}=\infty \right) \,$ and for $n=1,2,..,$
\begin{equation}\label{distN}
\mathbb{P}_x\left( N_\infty  =n\right) =\mathbb{P}\left( \tau _{0}^{-}=\infty \right)
\mathbb{P}_{x}\left( \tau _{0}^{-}<\infty \right) \left( \mathbb{P}\left(
\tau _{0}^{-}<\infty \right) \right) ^{n-1}.
\end{equation}
Indeed, the term $\mathbb{P}_{x}\left( \tau _{0}^{-}<\infty \right) $ appears above
as long we have at least one negative (below zero) excursion of the risk process.
Then the process returns to zero $n-1$ times and the pieces of these trajectories are i.i.d. since
$X_t$ is a L\'evy process. Finally, the last excursion from zero never returns to zero and hence
it identifies $N_\infty$.

The stationarity and independence of increments of $X$ imply that given
$\{N_\infty =n\}$, the random variables
$\{D_{k},k=1,\cdots,n\}$ are mutually independent and $\{D_{k},k=2,\ldots ,n\}$ are identically distributed.
We start from the key observation:
\begin{eqnarray}\label{eqkq}
\mathbb{P}_{x}\left( \bar{U}_{\infty }<r \right)  &=&\mathbb{P}%
_{x}\left( N_\infty  =0 \right) +\mathbb{P}_{x}\left( \bar{U}_{\infty }<r,\tau _{0}^{-}<\infty \right) \notag  \\
&=& \e\left[X_1 \right]W(x)+\mathbb{P}%
_{x}\left( \bar{U}_{\infty }<r,\tau _{0}^{-}<\infty \right).
\end{eqnarray}
Then,
\begin{align}\label{case1}
\mathbb{P}_{x}\left( \bar{U}_{\infty }<r,\tau _{0}^{-}<\infty \right)=&\mathbb{P}_{x}\left( D_{(N_\infty )}<r\right) \notag \\
&=\sum_{n=1}^{\infty }\mathbb{P}_{x}\left( N_\infty  =n\right) \mathbb{P}_{x}\left(
D_{(n)}<r|N_\infty =n\right)\notag  \\
&=\sum_{n=1}^{\infty }\mathbb{P}_{x}\left( N_\infty  =n\right) \mathbb{P}_{x}\left(
D_{1}<r,\cdots,D_{n}<r|N_\infty =n\right) \notag \\
&=\sum_{n=1}^{\infty }\mathbb{P}_{x}\left( N_\infty =n\right) \mathbb{P}_{x}\left(
D_{1}<r|R_{n}<\infty ,L_{n+1}=\infty \right) \\
&\times \mathbb{P}_x\left(
D_{2}<r|R_{n}<\infty ,L_{n+1}=\infty \right) ^{n-1},\notag
\end{align}
where, similar to \cite{Yin2014ExactJL}, we have
\begin{equation}\label{D1}
\mathbb{P}_{x}\left( D_{1}<r|R_{n}<\infty ,L_{n+1}=\infty \right) =\frac{%
\mathbb{E}_{x}\left[ \mathbb{P}_{X_{\tau _{0}^{-}}}\left(
\hat{\tau}_{0}^{+}<r\right) \mathbf{1}_{\left\{ \tau _{0}^{-}<\infty \right\} }\right]
}{\mathbb{P}_{x}\left( \tau _{0}^{-}<\infty \right) }
\end{equation}
and%
\begin{equation}\label{D2}
\mathbb{P}_x \left( D_{2}<r|R_{n}<\infty ,L_{n+1}=\infty \right) =\frac{%
\mathbb{E}\left[ \mathbb{P}_{X_{\tau _{0}^{-}}}\left( \hat{\tau}_{0}^{+}<r\right)
\mathbf{1}_{\left\{ \tau _{0}^{-}<\infty \right\} }\right] }{\mathbb{P}\left( \tau _{0}^{-}<\infty \right) }.
\end{equation}
Above, the subscript $X_{\tau_0^-}$
means that we are dealing here with an event for a copy of the process
$X$ now starting at the point $X_{\tau_0}$
at time $t=0$, that is independent of the segment $\{X_t, t\in [0,\tau_0^-)\}$ of the
original process. To underline this independent copy we add hat to the original counterparts.
In particular, above, $\hat{\tau}_{0}^{+}$ is a first passage time of zero of this independent copy.
Similarly, we will treat initial position $X_{\tau}$ for other stopping times $\tau$.
Using Eq. $(22)$ in Lkabous et al. \cite{lkabousetal2016} for $\delta =0$, we
have
\begin{equation*}
\mathbb{E}_{x}\left[ \mathbb{P}_{X_{\tau _{0}^{-}}}\left( \hat{\tau}
_{0}^{+}<r\right) \mathbf{1}_{\left\{ \tau _{0}^{-}<\infty \right\} }\right]
=\Lambda \left( x,r\right) -W\left( x\right) \Upsilon \left( r\right)
\end{equation*}
and
\begin{equation*}
\mathbb{E}\left[ \mathbb{P}_{X_{\tau _{0}^{-}}}\left( \hat{\tau} _{0}^{+}<r\right)
\mathbf{1}_{\left\{ \tau _{0}^{-}<\infty \right\} }\right] =1-W\left(
0\right) \Upsilon \left( r\right) ,
\end{equation*}
where we used the fact that $\Lambda \left( 0,r\right) =1$ and $\Upsilon$ is defined in \eqref{Upsilon}.
Plugging the above two expressions in \eqref{case1}, we obtain
\begin{align}\label{recover}
\mathbb{P}_{x}\left( \bar{U}_{\infty }<r,\tau _{0}^{-}<\infty \right)  =&\mathbb{P}\left( \tau _{0}^{-}=\infty \right) \mathbb{E}_{x}\left[
\mathbb{P}_{X_{\tau _{0}^{-}}}\left( \hat{\tau} _{0}^{+}<r\right) \right]
\sum_{n=1}^{\infty }\mathbb{E}\left[ \mathbb{P}_{X_{\tau _{0}^{-}}}\left(
\hat{\tau} _{0}^{+}<r\right) \mathbf{1}_{\left\{ \tau _{0}^{-}<\infty \right\} }\right] ^{n-1} \notag \\
&=\mathbb{P}\left( \tau _{0}^{-}=\infty \right) \mathbb{E}_{x}\left[
\mathbb{P}_{X_{\tau _{0}^{-}}}\left( \hat{\tau} _{0}^{+}<r\right) \right] \frac{1}{%
1-\mathbb{E}\left[ \mathbb{P}_{X_{\tau _{0}^{-}}}\left( \hat{\tau}
_{0}^{+}<r\right) \mathbf{1}_{\left\{ \tau _{0}^{-}<\infty \right\} }\right]
}\notag \\
&=W\left( 0\right) \mathbb{E}\left[ X_{1}\right] \left( \Lambda \left(
x,r\right) -W\left( x\right) \Upsilon \left( r\right) \right) \frac{1}{W\left(
0\right) \Upsilon \left( r\right) }\notag  \\
&=\mathbb{E}\left[ X_{1}\right] \left( \frac{\Lambda \left( x,r\right) }{%
\Upsilon \left( r\right) }-W\left( x\right) \right),
\end{align}
where in the last equality we used the fact that $W(0) > 0$. The resutls follows by substiting the last expression in \eqref{case1}.

	Now, if $X$ has paths of unbounded variation, we consider an approximation approach similar to \cite{Yin2014ExactJL}. More specifically, for $\epsilon >0$, let
	\begin{align*}
	L_1(\epsilon) = \tau ^{-}_{-\epsilon} = \inf \left\{t\ge 0: X_t <-\epsilon \right\},
	\end{align*}
	and
	\begin{align*}
	R_1(\epsilon)=\inf\left\{t \geq L_1 (\epsilon) :  X_t = 0\right\}.
	\end{align*}
	Recursively, we define two sequences of stopping times $\left\{L_k(\epsilon)\right\}_{k \geq 1}$ and $\left\{R_k(\epsilon)\right\}_{k \geq 1}$ as follows. For $k \geq 2$, let
	$$L_k  (\epsilon) = \inf \left\lbrace t \geq R_{k-1}(\epsilon):  X_t < -\epsilon \right\rbrace $$
	and
	$$ R_k (\epsilon)  =  \inf \left\lbrace t \geq L_k (\epsilon)  :  X_t = 0\right\rbrace.$$
 Let $D^{(\epsilon)}_{k}=R_{k}(\epsilon)-L_{k} (\epsilon)$ represents the duration of the $k^{\rm th}$ negative excursion below the level $-\epsilon $.
Let
	\begin{equation*}
	N^{(\epsilon)}_\infty =\sup \left\{ k: L_{k}(\epsilon) <0 \right\},
	\end{equation*}
and we write
\begin{equation}\label{defi1}
\bar{U}^{(\epsilon)}_{\infty }=D_{(N^{(\epsilon)}_\infty )}.
\end{equation}
Then
\begin{align*}
\mathbb{P}_{x}\left( \bar{U}^{(\epsilon)}_{\infty }<r,\tau _{-\epsilon }^{-}<\infty \right)  =&\mathbb{P}\left( \tau _{-\epsilon }^{-}=\infty \right) \mathbb{E}_{x}\left[ \mathbb{P}_{X_{\tau _{-\epsilon }^{-}}}\left( \hat{\tau} _{0}^{+}<r\right) \right]
\sum_{n=1}^{\infty }\mathbb{E}\left[ \mathbb{P}_{X_{\tau _{-\epsilon }^{-}}}\left(
\hat{\tau} _{0}^{+}<r\right) \mathbf{1}_{\left\{ \tau _{-\epsilon }^{-}<\infty \right\} }\right] ^{n-1} \\
&=\mathbb{P}\left( \tau _{-\epsilon }^{-}=\infty \right) \mathbb{E}_{x}\left[
\mathbb{P}_{X_{\tau _{-\epsilon }^{-}}}\left( \hat{\tau} _{0}^{+}<r\right) \right] \frac{1}{%
1-\mathbb{E}\left[ \mathbb{P}_{X_{\tau _{-\epsilon }^{-}}}\left( \hat{\tau}
_{0}^{+}<r\right) \mathbf{1}_{\left\{ \tau _{-\epsilon }^{-}<\infty \right\} }\right]
} \\
&=W\left( \epsilon \right) \mathbb{E}\left[ X_{1}\right] \left( \Lambda^{(\epsilon)} \left(
x-\epsilon ,r\right) -W\left(x \right)\Upsilon^{(\epsilon)} \left( r\right) \right) \frac{1}{ 1-\Lambda^{(\epsilon)} \left(0,z\right) +\Upsilon^{(\epsilon)}  \left( r\right) }, \\
\end{align*}
where
$$ \Lambda^{(\epsilon)} \left(x,z\right) =\int_{\epsilon}^{\infty }W\left(x+u\right) \frac{u}{z}\mathbb{P}\left( X_{z}\in \mathrm{d}u\right), $$
and
$$ \Upsilon ^{(\epsilon)}\left( r\right)=\int_{\epsilon}^{\infty } \frac{u}{z}\mathbb{P}\left( X_{z}\in \mathrm{d}u\right).$$
Thus
\begin{align*}
\mathbb{P}_{x}\left( \bar{U}_{\infty }<r,\tau _{0 }^{-}<\infty \right) &=\lim_{\epsilon \rightarrow 0 } \mathbb{P}_{x}\left( \bar{U}^{(\epsilon)}_{\infty }<r,\tau _{-\epsilon }^{-}<\infty \right)\\ &=
\lim_{\epsilon \rightarrow 0 } \ \frac{W\left( \epsilon \right) \mathbb{E}\left[ X_{1}\right] \left( \Lambda^{(\epsilon)} \left(
x-\epsilon ,r\right) -W\left(x \right)\Upsilon^{(\epsilon)} \left( r\right) \right)}{ 1-\Lambda^{(\epsilon)} \left(0,z\right) +  W\left( \epsilon\right)\Upsilon^{(\epsilon)}  \left( r\right) } \\
&=\mathbb{E}\left[ X_{1}\right] \left( \frac{\Lambda \left( x,r\right) }{%
\Upsilon \left( r\right) }-W\left( x\right) \right),
\end{align*}
where in the last equality, we used $ \lim_{\epsilon \rightarrow 0 }  \frac{1- \Lambda^{(\epsilon)} \left(0,r\right)}{W(\epsilon)} =0$ as showed in Eq. $(14)$ of \cite{loeffenetal2013}.
\end{proof}
The distribution of $\bar{U}_{\infty}$ in Proposition \ref{thm1} coincides with the expression of the probability of Parisian ruin with delay $r>0$ obtained in \cite{loeffenetal2013} with
\begin{eqnarray}\label{SNLPPr2}
\mathbb{P}_{x}\left( \bar{U}_{\infty }>r\right)=\mathbb{P}_{x}\left( \kappa _{r}<\infty \right),\end{eqnarray}
where $\kappa_r$ is the Parisian ruin time and also the first passage time of $U$ above $r$ (see Figure~\ref{fig2}).
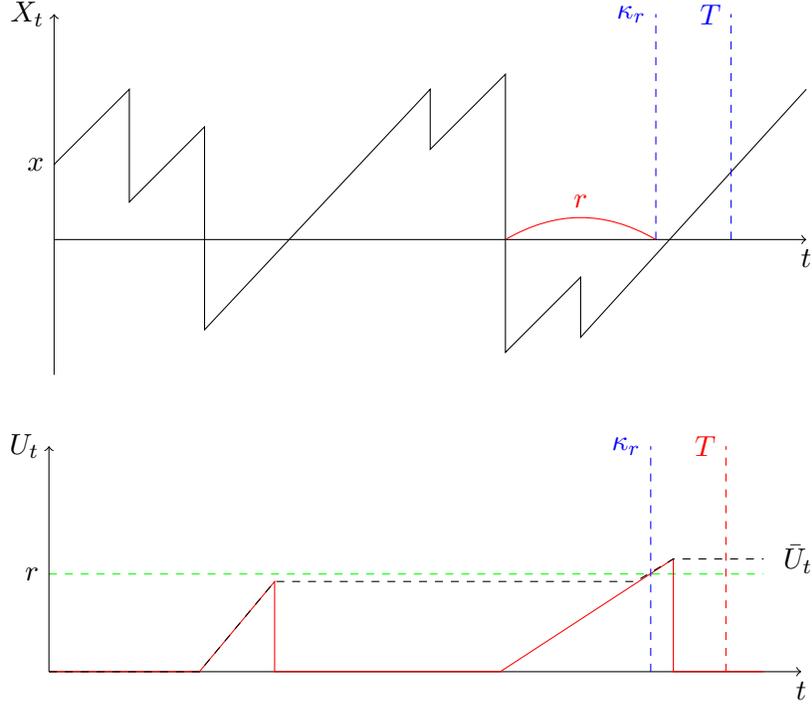
\begin{figure}[h!]
\begin{center}
\begin{tikzpicture}[domain=0:5]
  \draw[ thin,color=gray] (-0.1,-1.1)  (3.9,3.9);
\draw[->] (0,0) -- (10,0) node[below] {$t$};
 \draw[-] (0,1) -- (0,1) node[left] {$x$};
\draw[->] (0,-1.8) -- (0,3) node[left] {$X_t$};
\draw[color=black] (0,1) -- (1,2) --(1,0.5)--(2,1.5)--(2,-1.2)--(5,2)--(5,1.2)--(6,2.2)--(6,-1.5)--(7,-0.5)--(7,-1.3)--(10,2);
\path[-]  [red](6,0) edge [bend left] node[above] {$r$} (8,0);
  \draw[-][blue,dashed](8,0) -- (8,3) node[left] {$\kappa_r$};
      \draw[-][blue,dashed](9,0) -- (9,3) node[left] {$T$};
\end{tikzpicture}
  \begin{tikzpicture}[domain=0:5]
  \draw[ thin,color=gray] (-0.1,-1.1)  (3.9,3.9);
\draw[->] (0,0) -- (10,0) node[below] {$t$};
 \draw[-] (0,1.3) -- (0,1.3) node[left] {$r$};
\draw[-][color=green,dashed] (0,1.3) -- (9.5,1.3) node[left] {};
\draw[->] (0,0) -- (0,3) node[left] {$U_t$};
\draw[color=red] (0,0) -- (0,0) --(0,0)--(2,0)--(3,1.2)--(3,1.2)--(3,0)--(5,0)--(6,0) --(8.3,1.5)--(8.3,0)--(8.3,0)--(9.5,0);
\draw[color=black,dashed] (0,0) -- (0,0) --(0,0)--(2,0)--(3,1.2)--(7.8,1.2)--(8.3,1.5)--(9.5,1.5)node[right] {$\text{            }\bar{U}_t$}; ;
  \draw[-][blue,dashed](8,0) -- (8,3) node[left] {$\kappa_r$};
    \draw[-][red,dashed](9,0) -- (9,3) node[left] {$T$};
\end{tikzpicture}
\caption{Sample path of $U$ (red line) and corresponding running longest $\bar{U}$ (dashed black line).}
\label{fig2}
\end{center}
\end{figure}
Similarly, one can link many results available in the literature with the distribution of the largest negative excursion. Indeed, for $x\leq b$, we have
\begin{eqnarray*}
\mathbb{P}_{x}\left( \bar{U}_{\tau^{+}_b  }<r\right) =\p_{x}\left(\tau^{+}_b <\kappa_{r} \right)= \frac{\Lambda(x,r)}{\Lambda(b,r)},
\end{eqnarray*}
where the second equality follows from Lkabous et al. \cite{lkabousetal2016} for $\delta=0$.
We can also recover the Laplace transform of $\kappa_{r}$ derived in \cite{lkabousetal2016} and \cite{loeffenetal2017}, that is,
\begin{eqnarray}
\mathbb{P}_{x}\left( \bar{U}_{\mathrm{e}_q}>r\right)
&=&\mathbb{P}_{x}\left( \kappa _{r}<\mathrm{e}_{q}\right)   \nonumber \\
&=&\mathrm{e}^{-qr}Z_{q}\left( x\right) +q\mathrm{e}^{-qr}\int_{0}^{r}%
\Lambda ^{\left( q\right) }\left( x,s\right) \mathrm{d}s\nonumber  \\
&&-\frac{\mathrm{e}^{-qr}\Lambda ^{\left( q\right) }\left( x,r\right) }{%
\int_{0}^{\infty }\mathrm{e}^{\Phi _{q}z}(z/r)\mathbb{P}\left( X_{r}\in
\mathrm{d}z\right) }\left( \frac{q}{\Phi _{q}}+q\int_{0}^{r}\int_{0}^{\infty
}\mathrm{e}^{\Phi _{q}z}\frac{z}{r}\mathbb{P}\left( X_{r}\in \mathrm{d}%
z\right) \mathrm{d}s\right).
\end{eqnarray}
More generally, for a finite-horizon time $t$, we have
\begin{equation*}
\p_{x}\left(\bar{U}_t >r \right)=\p_{x}\left(\kappa_{r}\leq t \right).
\end{equation*}
Finally, the Laplace transform of $\bar{U}_{\mathrm{e}_{\theta }}$ is given by
\begin{equation*}
\mathbb{E}_{x}\left[ \mathrm{e}^{-q\bar{U}_{\mathrm{e}_{\theta }}}\right]
=\int_{0}^{\infty }q\mathrm{e}^{-qr}\mathbb{P}_{x}\left( \bar{U}_{\mathrm{e}%
_{\theta }}<r\right) \mathrm{d}r=\int_{0}^{\infty }q\mathrm{e}^{-qr}\mathbb{P}%
_{x}\left( \kappa _{r}>\mathrm{e}_{\theta }\right) \mathrm{d}r.
\end{equation*}
 Our approach can be extended to first Poissonian passage times. We define the first time the surplus $X$ is observed above $b$ at a Poisson arrival time by
\begin{equation}\label{poissfirspassage}
T^+_b := \min \{T_i : X_{T_i} > b, i \in \mathbb{N}\}
\end{equation}
with the convention $\sup \emptyset = 0$, where $\{T_i\}$ are points of the Poisson process with intensity $\alpha>0$.
We start by identifying
$$\mathbb{P}_{x}\left( \bar{U}_{T _{b}^{+}}<r\right).$$
Letting $\theta=0$ in Eq. (17) in \cite{albrecheretal2016}, we have
\begin{equation}\label{f}
\mathbb{P}_{x}\left( T  _{b}^{+}<\tau _{0}^{-}\right)=\dfrac{\alpha W(x)}{\Phi_\alpha Z(b,\Phi_\alpha)}.
\end{equation}
Moreover, note that
\begin{eqnarray}
\mathbb{P}_{x}\left( \bar{U}_{T _{b}^{+}}\in dy\right) &=&
\mathbb{P}_{x}\left( T_{b}^{+}<\tau _{0}^{-}\right) \delta _{0}\left( dy\right) +%
\mathbb{P}_{x}\left( \bar{U}_{T_{b}^{+}}\in dy,\tau _{0}^{-}<T _{b}^{+}
\right)  \\
&=& \dfrac{\alpha W(x)}{\Phi_\alpha Z(b,\Phi_\alpha)} \delta _{0}\left( dy\right) +%
\mathbb{P}_{x}\left( \bar{U}_{T _{b}^{+}}\in dy,\tau _{0}^{-}<T _{b}^{+}
\right)
\end{eqnarray}
and hence
\begin{equation}\label{nowy2}
\mathbb{P}_{x}\left( \bar{U}_{T_{b}^{+}}< r\right) = \dfrac{\alpha W(x)}{\Phi_\alpha Z(b,\Phi_\alpha)}  + \mathbb{P}_{x}\left( \bar{U}_{T_{b}^{+}}<r,\tau_{0}^{-}<T_{b}^{+}
\right).
\end{equation}
Similarly to the previous proof, the random variable $\bar{U}_{T _{b}^{+} }$ can
expression as
\begin{equation}\label{defi2}
\bar{U}_{T _{b}^{+} }=D_{(N_{T _{b}^{+}})},
\end{equation}
where
\begin{equation*}
N_{T _{b}^{+}}=\sup \left\{ k:\text{ }R_{k}<T _{b}^{+}\right\}
\end{equation*}%
such that $\mathbb{P}_x\left( N_{T _{b}^{+}}=0\right) =%
\mathbb{P}_x\left( T _{b}^{+}<\tau _{0}^{-}\right) \,$\ and for $n=1,2,..,$
\begin{equation*}
\mathbb{P}_x\left( N_{T _{b}^{+}}=n\right) =\mathbb{P}_{x}\left( T _{b}^{+}<\tau
_{0}^{-}\right) \mathbb{P}_{x}\left( \tau _{0}^{-}<T _{b}^{+}\right)
\left( \mathbb{P}\left( \tau _{0}^{-}<T _{b}^{+}\right) \right) ^{n-1}%
\text{ .}
\end{equation*}
Hence
\begin{align}
\mathbb{P}_{x}\left( \bar{U}_{T _{b}^{+}}<r,\tau _{0}^{-}<T
_{b}^{+}\right)
=&\mathbb{P}_{x}\left( \max_{k=1,..,N_{T _{b}^{+}}}D_{k}<r,\tau _{0}^{-}<\tau
_{b}^{+}\right)  \nonumber\\
&=\sum_{n=1}^{\infty }\mathbb{P}_{x}\left( N_{T _{b}^{+}}=n\right) \mathbb{P}_{x}\left(
\max_{k=1,..,n}D_{k}<r|N_{T _{b}^{+}}=n\right)\nonumber  \\
&=\sum_{n=1}^{\infty }\mathbb{P}_{x}\left( N_{T _{b}^{+}}=n\right) \mathbb{P}_{x}\left(
D_{1}<r,...,D_{n}<r|N_{T _{b}^{+}}=n\right)  \nonumber\\
&=\sum_{n=1}^{\infty }\mathbb{P}_{x}\left( N_{T _{b}^{+}}=n\right) \mathbb{P}_{x}\left(
D_{1}<r,|R_{n}<T _{b}^{+},L_{n+1}>T _{b}^{+} \right)\notag \\ &\times \mathbb{P}\left(
D_{2}<r|R_{n}<T _{b}^{+},L_{n+1}>T _{b}^{+} \right) ^{n-1},\label{nowy3}
\end{align}%
where
\begin{equation}\label{nowy4}
\mathbb{P}_{x}\left( D_{1}<r|R_{n}<T _{b}^{+},L_{n+1}>T _{b}^{+} \right) =%
\frac{\mathbb{E}_{x}\left[ \mathbb{P}_{X_{\tau _{0}^{-}}}\left( \hat{\tau}
_{0}^{+}<r\right) \ind_{\left\{ \tau _{0}^{-}<T _{b}^{+}\right\} }\right] }{%
\mathbb{P}_{x}\left( \tau _{0}^{-}<T _{b}^{+}\right) }
\end{equation}%
and
\begin{equation}\label{nowy5}
\mathbb{P}\left(D_{2}<r|R_{n}<T _{b}^{+},L_{n+1}>T _{b}^{+}\right) =
\frac{\mathbb{E}\left[ \mathbb{P}_{X_{\tau _{0}^{-}}}\left( \hat{\tau}_{0}^{+}<r\right)
\ind_{\left\{ \tau _{0}^{-}<T _{b}^{+}\right\}}
\right]}{ \mathbb{P}
\left(\tau _{0}^{-}<T _{b}^{+}\right)}.
\end{equation}
Further, by \eqref{nowy}
\[\mathbb{E}_x\left[ \mathbb{P}_{X_{\tau _{0}^{-}}}\left( \hat{\tau}_{0}^{+}<r\right)
\ind_{\left\{ \tau _{0}^{-}<T _{b}^{+}\right\} } \right]=\mathbb{E}_{x}\left[ \Lambda \left( X_{\tau _{0}^{-}},r\right) \ind_{\left\{
\tau _{0}^{-}<T _{b}^{+}\right\} }\right].\]
We will find now $\mathbb{P}_{x}\left(X_{\tau _{0}^{-}} \in \md y ,\tau_{0}^{-}< T_{b}^{+} \right)$
which produces $\mathbb{E}_{x}\left[ \Lambda \left( X_{\tau _{0}^{-}},r\right) \ind_{\left\{
\tau _{0}^{-}<T _{b}^{+}\right\} }\right]$
and $\mathbb{P}_x\left(\tau _{0}^{-}<T _{b}^{+}\right)$.
Note that
\begin{equation}\label{nowy6}\mathbb{P}_{x}\left(X_{\tau _{0}^{-}} \in \md y ,\tau_{0}^{-}< T_{b}^{+} \right) =\mathbb{P}_{x}\left(X_{\tau _{0}^{-}} \in \md y\right)
-\int_b^\infty\mathbb{P}_{x}\left(X_{T_{b}^{+}} \in \md z ,T_{b}^{+}<\tau_{0}^{-} \right)
\mathbb{P}_{z}\left(X_{\tau _{0}^{-}} \in \md y\right),\end{equation}
where
\begin{equation}\label{nowy8}
\mathbb{P}_{x}\left(X_{T_{b}^{+}} \in \md z ,T_{b}^{+}<\tau_{0}^{-} \right)=
\alpha \frac{W^{(0,\alpha)}(b-z)}{W^{(0,\alpha)}(b)}W(x)-W(x-z)
\end{equation}
by \cite[Cor. 3.1]{landriault2018distribution},
for
\[W^{(0,\alpha)}(x)=\dfrac{(\Phi_\alpha -\Phi_0)}{\alpha}Z(x,\Phi_\alpha).\]

In this way, we obtain $\mathbb{P}_{x}\left( \bar{U}_{T _{b}^{+}}<r\right)$
given in \eqref{nowy2}, where $\mathbb{P}_{x}\left( \bar{U}_{T _{b}^{+}}<r,\tau _{0}^{-}<T_{b}^{+}\right)$ is identified by identities \eqref{nowy3}-\eqref{nowy8}.
\subsection{Shortest duration up to an infinite time horizon}
In this subsection, we will derive the distribution of the shortest negative duration that we denote by $\underline{U}_{\infty }$ which is only for relevant bounded variation processes. First, we notice that $\underline{U}_{\infty }$ can be represented as
\begin{equation}\label{defi1}
\underline{U}_{\infty }=D_{(1)},
\end{equation}
where $D_{(1)}=\min \{ D_{1},\ldots ,D_{N_\infty} \}$.
Then, we have the following expression for $\mathbb{P}_{x}\left( \underline{U}_{\infty }>r ,\tau _{0}^{-}<\infty \right)$.
\begin{theorem}\label{thmmin}
Let $r>0$, $x\in \mathbb{R}$ and $\e[X_1]>0$. If $W(0)>0$ (or equivalently $X$ has paths
of bounded variation), then
\begin{align*}
\mathbb{P}_{x}\left( \underline{U}_{\infty }>r,\tau _{0}^{-}<\infty  \right)  =&
W\left( 0\right) \mathbb{E}\left[ X_{1}\right]\frac{1-W(x)(\e[X_1]-\Upsilon \left( r\right)) -\Lambda \left( x,r\right) }{1-W(0)( \Upsilon \left( r\right)-\e[X_1]) }.
\end{align*}
\end{theorem}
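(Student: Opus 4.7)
The plan is to mimic the binomial-expansion argument used in Proposition~\ref{thm1}, but with maxima replaced by minima. Since $X$ has bounded variation, $W(0)>0$ and the excursions $D_1,D_2,\ldots$ are well-defined, strictly positive, and (conditionally on $\{N_\infty=n\}$) mutually independent, with $D_2,\ldots,D_n$ identically distributed. Writing $\underline{U}_\infty = \min\{D_1,\ldots,D_{N_\infty}\}$ and using the law of $N_\infty$ from \eqref{distN}, I would condition as in \eqref{case1} to obtain
\begin{equation*}
\mathbb{P}_x(\underline{U}_\infty > r,\tau_0^-<\infty)
= \sum_{n=1}^{\infty}\mathbb{P}_x(N_\infty=n)\,\mathbb{P}_x(D_1>r\mid R_n<\infty, L_{n+1}=\infty)\,\mathbb{P}(D_2>r\mid R_n<\infty, L_{n+1}=\infty)^{n-1}.
\end{equation*}

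The conditional probabilities of each $D_k>r$ follow the same pattern as \eqref{D1}--\eqref{D2}, but with the event $\{\hat\tau_0^+<r\}$ replaced by its complement. Concretely, I would write
\begin{equation*}
\mathbb{E}_x\bigl[\mathbb{P}_{X_{\tau_0^-}}(\hat\tau_0^+ > r)\ind_{\{\tau_0^-<\infty\}}\bigr]
= \mathbb{P}_x(\tau_0^-<\infty) - \mathbb{E}_x\bigl[\mathbb{P}_{X_{\tau_0^-}}(\hat\tau_0^+ < r)\ind_{\{\tau_0^-<\infty\}}\bigr],
\end{equation*}
and then plug in $\mathbb{P}_x(\tau_0^-<\infty) = 1-\mathbb{E}[X_1]W(x)$ from \eqref{E:classicalruinprobaX} together with the identity $\mathbb{E}_x[\mathbb{P}_{X_{\tau_0^-}}(\hat\tau_0^+<r)\ind_{\{\tau_0^-<\infty\}}] = \Lambda(x,r)-W(x)\Upsilon(r)$ already invoked in the proof of Proposition~\ref{thm1}. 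This yields
\begin{equation*}
\mathbb{E}_x\bigl[\mathbb{P}_{X_{\tau_0^-}}(\hat\tau_0^+ > r)\ind_{\{\tau_0^-<\infty\}}\bigr]
= 1 - W(x)\bigl(\mathbb{E}[X_1]-\Upsilon(r)\bigr)-\Lambda(x,r),
\end{equation*}
and specializing to $x=0$ (using $\Lambda(0,r)=1$) gives $\mathbb{E}[\mathbb{P}_{X_{\tau_0^-}}(\hat\tau_0^+>r)\ind_{\{\tau_0^-<\infty\}}] = W(0)(\Upsilon(r)-\mathbb{E}[X_1])$.

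Substituting these into the sum, the $\mathbb{P}_x(\tau_0^-<\infty)$ factors cancel and a geometric series in $W(0)(\Upsilon(r)-\mathbb{E}[X_1])$ emerges; summing it produces
\begin{equation*}
\mathbb{P}_x(\underline{U}_\infty>r,\tau_0^-<\infty) = \mathbb{P}(\tau_0^-=\infty)\cdot\bigl[1-W(x)(\mathbb{E}[X_1]-\Upsilon(r))-\Lambda(x,r)\bigr]\cdot\frac{1}{1-W(0)(\Upsilon(r)-\mathbb{E}[X_1])},
\end{equation*}
and using $\mathbb{P}(\tau_0^-=\infty) = W(0)\mathbb{E}[X_1]$ delivers the announced formula.

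The argument is essentially bookkeeping once Proposition~\ref{thm1} is in place, so I do not anticipate a serious obstacle; the one point to be careful about is the convergence of the geometric series, which requires $W(0)|\Upsilon(r)-\mathbb{E}[X_1]|<1$. Since $\Upsilon(r)\to \mathbb{E}[X_1]$ as $r\to\infty$ and is bounded, the inequality holds for any fixed $r>0$ (with a limiting argument for small $r$ if needed), and that is why the statement is restricted to the bounded-variation case where $W(0)>0$ and the excursion-counting variable $N_\infty$ is well defined. The unbounded variation case would require the same $\epsilon$-approximation procedure used at the end of Proposition~\ref{thm1}, but because minima are not preserved under such approximations (arbitrarily short near-zero excursions are introduced), the statement is genuinely limited to bounded variation.
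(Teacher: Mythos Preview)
Your proposal is correct and follows essentially the same route as the paper's own proof: decompose on $\{N_\infty=n\}$, use the conditional independence of the $D_k$'s, compute the complementary excursion probabilities by subtracting the $\Lambda$--$\Upsilon$ identities from the ruin probability, and sum the resulting geometric series. The paper states the identities for $\mathbb{E}_x[\mathbb{P}_{X_{\tau_0^-}}(\hat\tau_0^+>r)\ind_{\{\tau_0^-<\infty\}}]$ directly rather than deriving them by complementation, but this is the same computation; your remarks on geometric-series convergence and the bounded-variation restriction are additional commentary not present in the paper but do not alter the argument.
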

\begin{proof}
We have
\begin{align}\label{eq1}
\mathbb{P}_{x}\left( \underline{U}_{\infty }>r ,\tau _{0}^{-}<\infty \right)=&\mathbb{P}_{x}\left( D_{(1)}>r \right)\notag
 \\
&=\sum_{n=1}^{\infty }\mathbb{P}_{x}\left( N_\infty =n\right) \mathbb{P}_{x}\left(
D_{(1)}>r|N_\infty =n\right)\notag  \\
&=\sum_{n=1}^{\infty }\mathbb{P}_{x}\left( N_\infty =n\right) \mathbb{P}_{x}\left(
D_{1}>r,\cdots,D_{n}>r|N_\infty =n\right) \notag \\
&=\sum_{n=1}^{\infty }\mathbb{P}_{x}\left( N_\infty =n\right) \mathbb{P}_{x}\left(
D_{1}>r|R_{n}<\infty ,L_{n+1}=\infty \right) \\
&\times \mathbb{P}_x\left(
D_{2}>r|R_{n}<\infty ,L_{n+1}=\infty \right) ^{n-1},\notag
\end{align}
where
\begin{equation*}
\mathbb{P}_{x}\left( D_{1}>r|R_{n}<\infty ,L_{n+1}=\infty \right) =\frac{%
\mathbb{E}_{x}\left[ \mathbb{P}_{X_{\tau _{0}^{-}}}\left( \hat{\tau}
_{0}^{+}>r\right) \mathbf{1}_{\left\{ \tau _{0}^{-}<\infty \right\} }\right]
}{\mathbb{P}_{x}\left( \tau _{0}^{-}<\infty \right) }
\end{equation*}
and
\begin{equation*}
\mathbb{P}_x \left( D_{2}>r|R_{n}<\infty ,L_{n+1}=\infty \right) =\frac{%
\mathbb{E}\left[ \mathbb{P}_{X_{\tau _{0}^{-}}}\left( \hat{\tau} _{0}^{+}>r\right)
\mathbf{1}_{\left\{ \tau _{0}^{-}<\infty \right\} }\right] }{\mathbb{P}\left( \tau _{0}^{-}<\infty \right) }.
\end{equation*}
Using the fact that
\begin{equation*}
\mathbb{E}_{x}\left[ \mathbb{P}_{X_{\tau _{0}^{-}}}\left( \hat{\tau}
_{0}^{+}>r\right) \mathbf{1}_{\left\{ \tau _{0}^{-}<\infty \right\} }\right]
=1-\e[X_1] W\left( x\right)-\Lambda \left( x,r\right) +W\left( x\right) \Upsilon \left( r\right),
\end{equation*}
\begin{equation*}
\mathbb{E}\left[ \mathbb{P}_{X_{\tau _{0}^{-}}}\left( \hat{\tau} _{0}^{+}>r\right)
\mathbf{1}_{\left\{ \tau _{0}^{-}<\infty \right\} }\right]=W\left(
0\right) (\Upsilon \left( r\right) -\e[X_1]),
\end{equation*}
and plugging the above two expressions in \eqref{eq1}, we finally obtain
\begin{align*}
\mathbb{P}_{x}\left( \underline{U}_{\infty }>r ,\tau _{0}^{-}<\infty \right)  =&\mathbb{P}\left( \tau _{0}^{-}=\infty \right) \mathbb{E}_{x}\left[
\mathbb{P}_{X_{\tau _{0}^{-}}}\left( \hat{\tau} _{0}^{+}>r\right)\mathbf{1}_{\left\{ \tau _{0}^{-}<\infty \right\} } \right]
\sum_{n=1}^{\infty }\mathbb{E}\left[ \mathbb{P}_{X_{\tau _{0}^{-}}}\left(
\hat{\tau} _{0}^{+}>r\right) \mathbf{1}_{\left\{ \tau _{0}^{-}<\infty \right\} } \right] ^{n-1} \\
&=\mathbb{P}\left( \tau _{0}^{-}=\infty \right) \frac{ \mathbb{E}_{x}\left[
\mathbb{P}_{X_{\tau _{0}^{-}}}\left( \hat{\tau} _{0}^{+}>r\right)\mathbf{1}_{\left\{ \tau _{0}^{-}<\infty \right\} }  \right]}{1-\mathbb{E}\left[ \mathbb{P}_{X_{\tau _{0}^{-}}}\left( \hat{\tau}
_{0}^{+}>r\right) \mathbf{1}_{\left\{ \tau _{0}^{-}<\infty \right\} }\right]
} \\
&=W\left( 0\right) \mathbb{E}\left[ X_{1}\right]\frac{1-W(x)(\e[X_1]-\Upsilon \left( r\right)) -\Lambda \left( x,r\right) }{1-W(0)( \Upsilon \left( r\right)-\e[X_1]) }.
\end{align*}
\end{proof}

\begin{figure}[h]
\includegraphics[scale=1,height=8cm]{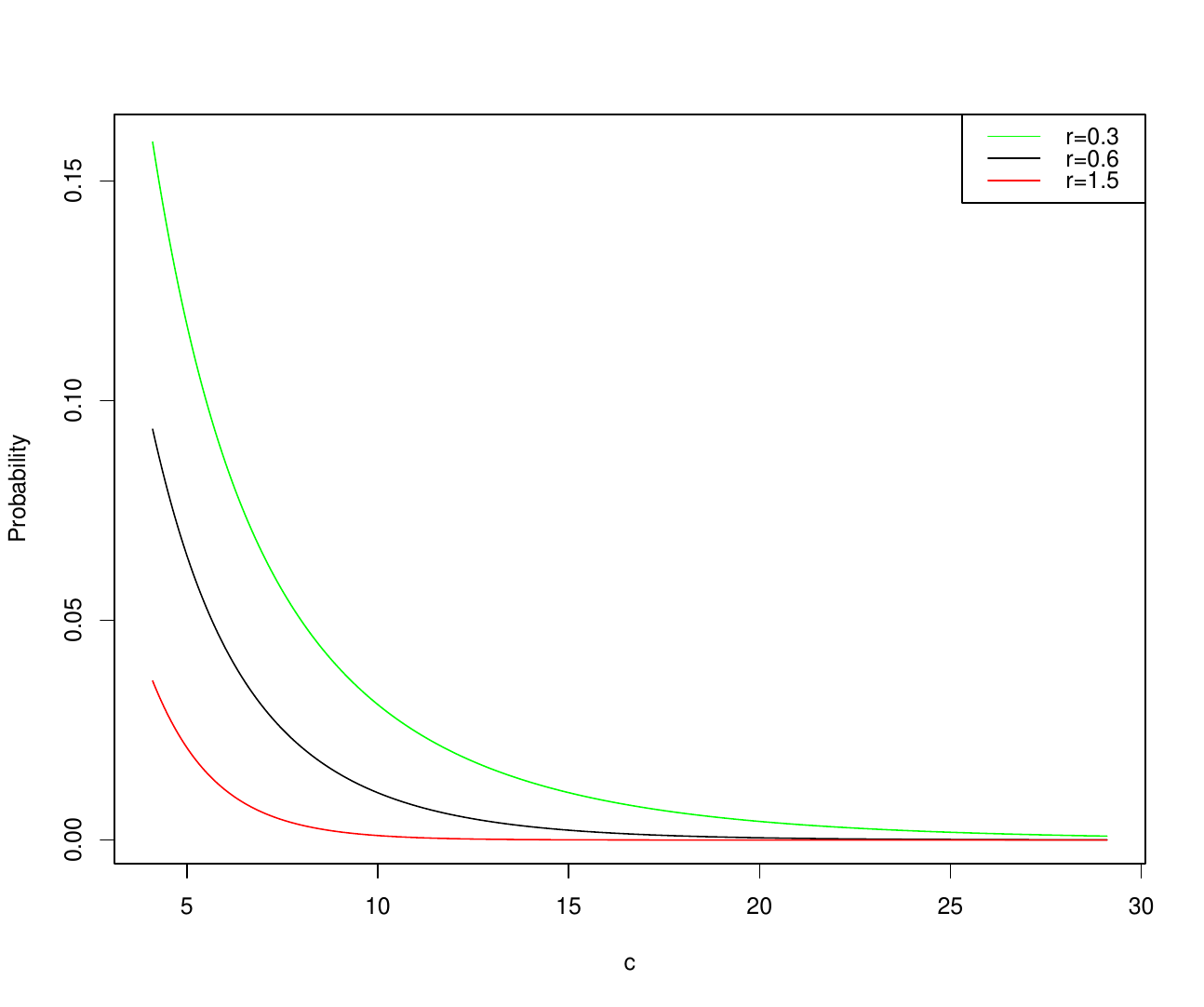}
\caption{Values of  $\mathbb{P}_{x}\left( \underline{U}_{\infty }>r,\tau _{0}^{-}<\infty  \right)$ for the Cramér-Lundberg process with $\alpha=1/2$, $\eta=2$ and $x=1$.}\label{fig:variesvar2}
\end{figure}
\begin{remark}
From the fact that $\lim_{x \rightarrow \infty } W(x)= \frac{1}{\e[X_1]} $, we obtain
\begin{equation*}
\lim_{x \rightarrow \infty}\mathbb{P}_{x}\left( \underline{U}_{\infty }>r,\tau _{0}^{-}<\infty  \right)=0.
\end{equation*}
\end{remark}
\begin{remark}
It is also possible to derive the distribution of the shortest duration up to the random times $\tau_b^+$ and $T_b^+$ and $\mathrm{e}_{q }$ as in previous subsections.
\end{remark}
The explicit formula in Theorem~\ref{thmmin} allows for a sensitivity analysis of the value of the distribution of shortest negative duration for the model in \eqref{CL} with respect to the premium rate $c$ and the delay parameter $r$. In Figure \ref{fig:variesvar2} and for a fixed initial capital $x$ we observe that for a fixed delay parameter $r$, the probability $\mathbb{P}_{x}\left( \underline{U}_{\infty }>r,\tau _{0}^{-}<\infty \right)$ decreases when the premium rate $c$ increases. This is because negative excursions are shorter when more premiums are injected and thus, the probability decreases when the duration $r$ increases but increases for lower values of $r$.
\subsection{Joint Distribution of $\underline{U}_{\infty}$ and $\bar{U}_{\infty}$}
Now, we are interested in the joint cumulative distribution function of the shortest and longest negative excursion as shown in the 3-dimensional (3D) plot in Figure \ref{fig:variesvar}.
\begin{theorem}
Let $0<u<v$, $x\in \mathbb{R}$ and $\e[X_1]>0$. If $W(0)>0$ (or equivalently $X$ has paths
of bounded variation), we have
\begin{align*}
\p_x(\underline{U}_{\infty}\leq u,\bar{U}_{\infty}\leq v,\tau^-_0 <\infty)
=&\mathbb{E}\left[ X_{1}\right]\left( \frac{\Lambda \left( x,v\right) }{\Upsilon\left( v\right) }-W(x) \right)\\
- &\frac{\mathbb{E}\left[ X_{1}\right] W\left( 0\right)\left(\Lambda \left( x,v\right)-\Lambda \left( x,u\right) -W\left( x\right) (\Upsilon \left( v\right)-\Upsilon \left( u\right)\right)}{1+W\left( 0\right) (\Upsilon \left( v\right)-\Upsilon \left( u\right))}.
\end{align*}
\end{theorem}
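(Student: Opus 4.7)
The plan is to obtain the joint law by writing
$$\p_x(\underline{U}_{\infty}\leq u,\bar{U}_{\infty}\leq v,\tau^-_0 <\infty) = \p_x(\bar{U}_{\infty}\leq v,\tau^-_0 <\infty) - \p_x(\underline{U}_{\infty}>u,\bar{U}_{\infty}\leq v,\tau^-_0 <\infty),$$
so that the first piece is handed to us by Proposition \ref{thm1} (which yields $\mathbb{E}[X_1](\Lambda(x,v)/\Upsilon(v) - W(x))$), and only the second piece requires real work.

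For the second piece I would mimic verbatim the binomial expansion machinery used in the proofs of Proposition \ref{thm1} and Theorem \ref{thmmin}. Namely, since $X$ has bounded variation, decompose the trajectory into the sequence of excursions of durations $\{D_k\}$, write $N_\infty$ for their number, and observe that
$$\{\underline{U}_\infty > u,\bar U_\infty \le v\} = \bigcap_{k=1}^{N_\infty}\{u < D_k \le v\}.$$
Conditioning on $\{N_\infty=n\}=\{R_n<\infty,L_{n+1}=\infty\}$ turns this into a product, and the distribution \eqref{distN} of $N_\infty$ plus the independence structure already exploited in \eqref{D1}--\eqref{D2} gives
\begin{align*}
\p_x(\underline{U}_{\infty}>u,\bar{U}_{\infty}\leq v,\tau^-_0 <\infty) = \p(\tau_0^-=\infty)\,\frac{A_x(u,v)}{1-A_0(u,v)},
\end{align*}
where $A_y(u,v):=\mathbb{E}_y[\p_{X_{\tau_0^-}}(u<\hat\tau_0^+\le v)\mathbf{1}_{\{\tau_0^-<\infty\}}]$ and I have summed the geometric series in $A_0(u,v)^{n-1}$.

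The concrete values of $A_x(u,v)$ and $A_0(u,v)$ come by taking the difference of the identity used in Proposition~\ref{thm1} (Eq.~(22) of Lkabous et al.\ \cite{lkabousetal2016}) at arguments $v$ and $u$:
$$A_x(u,v) = \bigl(\Lambda(x,v)-W(x)\Upsilon(v)\bigr)-\bigl(\Lambda(x,u)-W(x)\Upsilon(u)\bigr), \qquad A_0(u,v) = -W(0)\bigl(\Upsilon(v)-\Upsilon(u)\bigr),$$
using $\Lambda(0,r)=1$. Combining with $\p(\tau_0^-=\infty)=\mathbb{E}[X_1]W(0)$ from \eqref{E:classicalruinprobaX}, the minus sign in $A_0$ turns the denominator $1-A_0(u,v)$ into $1+W(0)(\Upsilon(v)-\Upsilon(u))$, which matches the statement. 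Subtracting from the term supplied by Proposition~\ref{thm1} finishes the proof.

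The only real obstacle I anticipate is purely bookkeeping, namely being careful about the sign in $A_0(u,v)$: because $\Upsilon$ is decreasing in its argument (seen directly, e.g.\ in the Cramér--Lundberg example from the preliminaries), one has $A_0(u,v)\ge 0$ in spite of the ordering $\Upsilon(v)-\Upsilon(u)\le 0$, which is what makes $1+W(0)(\Upsilon(v)-\Upsilon(u))$ actually lie in $(0,1]$ and legitimises the geometric-series summation. An approximation argument analogous to the one in the second half of the proof of Proposition~\ref{thm1} is not needed here since the statement restricts to the bounded variation case $W(0)>0$.
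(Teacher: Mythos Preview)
Your proposal is correct and follows essentially the same route as the paper: decompose as $\p_x(\bar U_\infty\le v,\tau_0^-<\infty)-\p_x(\underline U_\infty>u,\bar U_\infty\le v,\tau_0^-<\infty)$, evaluate the first term via Proposition~\ref{thm1} (Eq.~\eqref{recover}), and compute the second by conditioning on $N_\infty=n$, factorising over excursions, and summing the resulting geometric series using the identity from \cite{lkabousetal2016}. Your extra remark that $\Upsilon$ is decreasing (hence $A_0(u,v)\in[0,1)$ and the series converges) is a small bonus the paper leaves implicit.
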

\begin{proof}
 First, we have
\begin{align*}
\p_x(\underline{U}_{\infty}>u,\bar{U}_{\infty}\leq v,\tau^-_0 <\infty)=&\sum_{n=1}^{\infty }\mathbb{P}_x\left( N_\infty =n\right) \mathbb{P}_{x}\left(D_{(1)}>u,D_{(n)}\leq v|N_\infty =n\right) \\
=&\sum_{n=1}^{\infty }\mathbb{P}_x\left( N_\infty =n\right)\frac{\mathbb{P}_{x}\left(u<D_1 <v\right)}{\p_x(\tau^-_0 <\infty)}\left(\frac{\mathbb{P}\left(u<D_2 <v\right)}{\p(\tau^-_0 <\infty)} \right)^{n-1}
\\
=&\sum_{n=1}^{\infty }\mathbb{P}_x\left( N_\infty =n\right)\frac{\Lambda \left( x,v\right)-\Lambda \left( x,u\right) -W\left( x\right) (\Upsilon \left( v\right)-\Upsilon \left( u\right))}{\p_x(\tau^-_0 <\infty)}\\
\times&
\left(\frac{ W\left( 0\right) (\Upsilon \left( u\right)-\Upsilon \left( v\right))}{\p(\tau^-_0 <\infty)}\right)^{n-1}\\
=&\frac{\p(\tau^-_0 =\infty)\left(\Lambda \left( x,v\right)-\Lambda \left( x,u\right) -W\left( x\right) (\Upsilon \left( v\right)-\Upsilon \left( u\right)\right)}{1+W\left( 0\right) (\Upsilon \left( v\right)-\Upsilon \left( u\right))},
\end{align*}
and using \eqref{thm11},
\begin{align*}
\p_x(\underline{U}_{\infty}\leq u,\bar{U}_{\infty}\leq v,\tau^-_0 <\infty)=&
\p_x(\bar{U}_{\infty}\leq v,\tau^-_0 <\infty)-\p_x(\underline{U}_{\infty}> u,\bar{U}_{\infty}\leq v,\tau^-_0 <\infty)\\
=&\mathbb{E}\left[ X_{1}\right]\left( \frac{\Lambda \left( x,v\right) }{\Upsilon\left( v\right) }-W(x) \right)\\
- &\frac{\mathbb{E}\left[ X_{1}\right] W\left( 0\right)\left(\Lambda \left( x,v\right)-\Lambda \left( x,u\right) -W\left( x\right) (\Upsilon \left( v\right)-\Upsilon \left( u\right)\right)}{1+W\left( 0\right) (\Upsilon \left( v\right)-\Upsilon \left( u\right))}.
\end{align*}
\end{proof}
\begin{remark}
If $X$ has unbounded variation, the result in Theorem 5 reduces to Eq. \eqref{recover}.
\end{remark}
\begin{figure}[h!]
\includegraphics[scale=0.8,height=8cm]{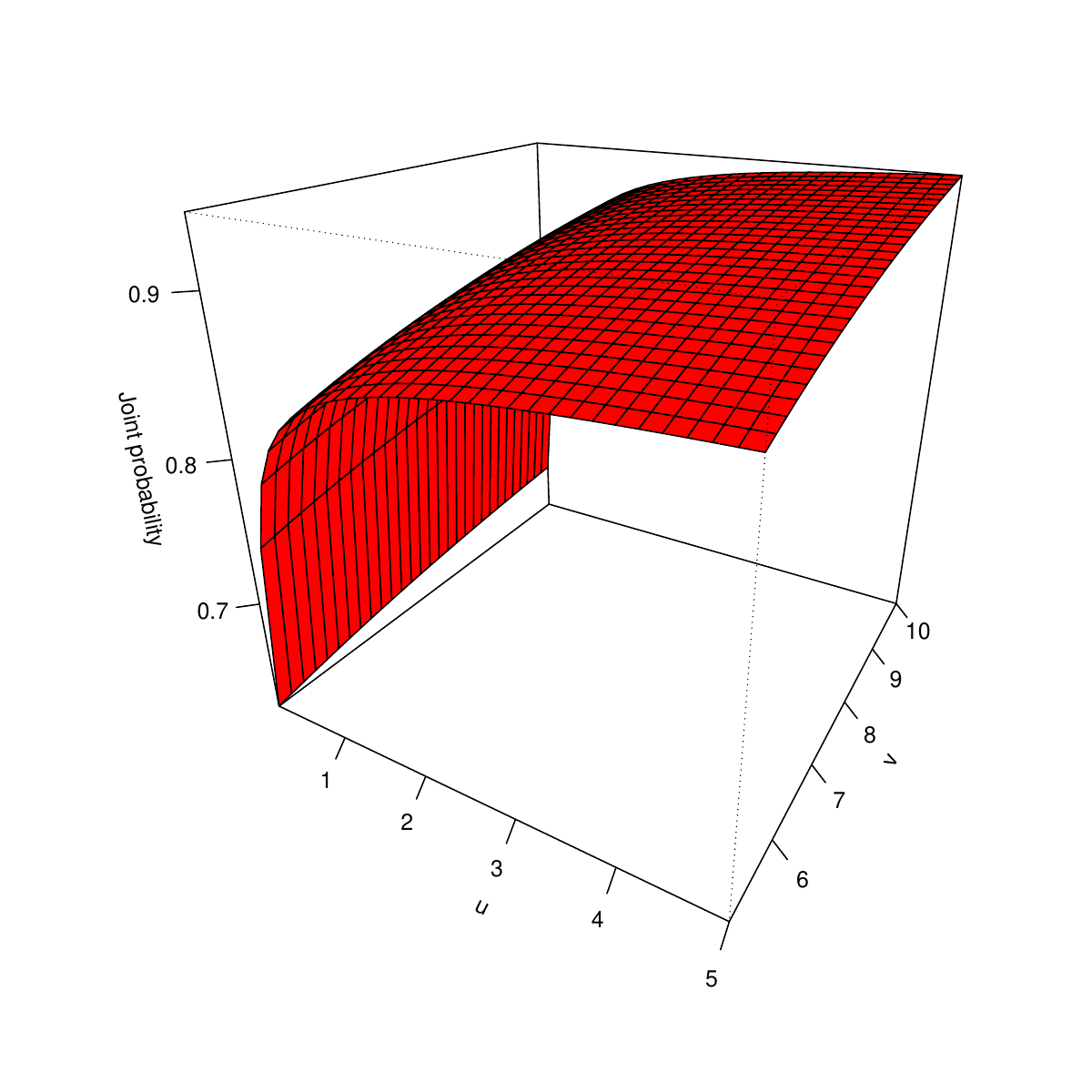}
\caption{Values of the joint cumulative distribution function of $(\underline{U}_{\infty},\bar{U}_{\infty})$ for the Cramér-Lundberg process with $\alpha=1/2$, $\eta=2$, $c=5.5$ and $x=1$.}\label{fig:variesvar}
\end{figure}

\subsection{The range of negative excursions}
For risk processes with bounded variation, the range of negative excursions is defined as the difference between the longest and shortest negative excursions and given by
\begin{equation}
\mathcal{R}_\infty =\bar{U}_\infty  - \underline{U}_\infty = D_{(N)}-D_{(1)},
 \label{supU}
\end{equation}
 where $N$ is defined in \eqref{defN}. Hence, making use of the connection between the random variables $\bar{U}_\infty$, $\underline{U}_\infty$ and the order statistics, we are able to derive the following results for the distribution of $\mathcal{R}$. First, observe that
\begin{equation*}
\p\left(\mathcal{R}_{\infty}  <r \right)=\p\left(\tau_{0}^{-}=\infty \right)+\p\left( \mathcal{R}_{\infty} <r,\tau_{0}^{-}<\infty\right).
\end{equation*}
where
\begin{eqnarray}\label{Range_U}
\mathbb{P}\left( \mathcal{R}_{\infty }<r,\tau _{0}^{-}<\infty \right)
=\sum_{n=1}^{\infty }\mathbb{P}\left( N_\infty  =n\right) \mathbb{P}\left(
D_{\left( n\right) }-D_{\left( 1\right) }<r| N_\infty =n \right)
\end{eqnarray}
and
\begin{eqnarray}\label{range}
&&\mathbb{P}\left( D_{\left( n\right) }-D_{\left( 1\right) }<r| N_\infty =n\right)=n\int_{0}^{\infty }f_{D_{1}}(y)\left( F_{D_{1}}(y+r)-F_{D_{1}}(y)\right)
^{n-1}\mathrm{d}y
\end{eqnarray}
for
$$F_{D_{1}}(y) =\mathbb{P}\left( D_{1}<y|R_{n}<\infty ,L_{n+1}=\infty
\right) =\frac{1 -W(0)\Upsilon \left( y\right) }{1-%
\mathbb{E}\left[ X_{1}\right] W(0)} $$
and $f_{D_{1}}(y)=\frac{\partial }{\partial y} F_{D_{1}}(y)$ is the probability density function of $D_1$. In Figure \ref{fig:variesrange}, the joint cumulative distribution function in \eqref{Range_U} decreases as $c$ increases and $r$ is fixed. This is an intuitive result as when the premium rate increases, the ruin event is less likely to occur and negative excursions are shorter.

\begin{figure}[h]
\includegraphics[scale=1,height=8cm]{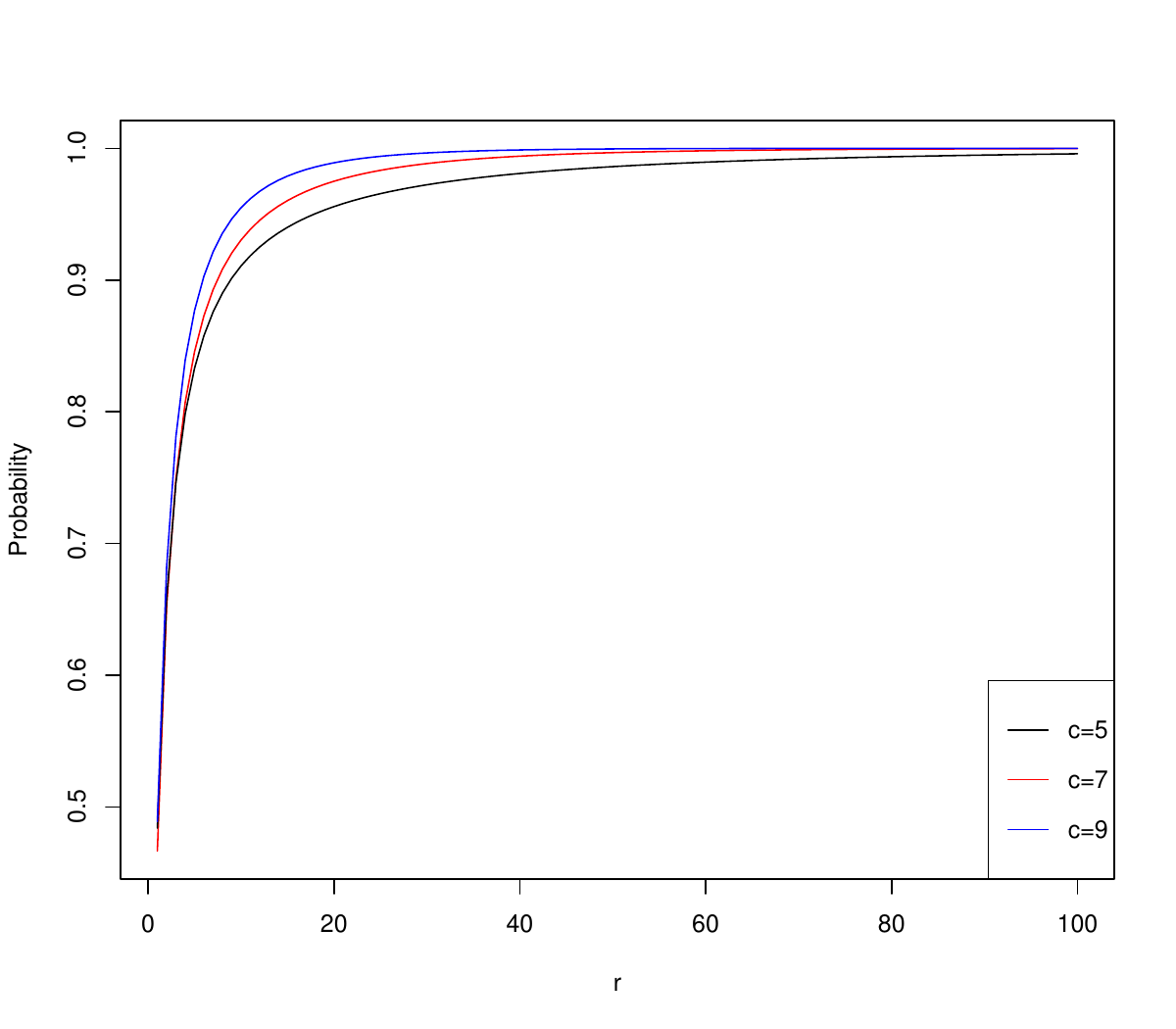}
\caption{Values of  $\mathbb{P}\left(  \mathcal{R}_{\infty} < r, \tau^-_0 <\infty \right)$ for the Cramér-Lundberg process with $\alpha=1/2$, $\eta=2$.}\label{fig:variesrange}
\end{figure}
\section{Applications}\label{Parisian}
This section is devoted to introducing new applications of the longest negative excursion using the binomial approach for processes with bounded variation.
\subsection{New Parisian ruin models}
We now present two new Parisian ruin models making use of the binomial expansion approach to derive the joint distribution of the total and longest negative excursion.
\subsubsection{Parisian ruins under the same roof}
Under the classical Parisian ruin model, a clock is assigned for each negative excursion. In \cite{guerin_renaud_2015}, cumulative ruin has been introduced in which the cumulative sum of excursions below zero is considered and compared to a fixed delay. In this section, we are interested in a new type of Parisian ruin in which both ruin models are considered. Thus, for $r<l$, we define
$$
\rho_{r,l} = \min( \sigma_l , \kappa_r),$$
where $\sigma_l = \inf \left\lbrace t > 0 \colon \mathcal{O}^{X}_{t} > l \right\rbrace$ and $\mathcal{O}^{X}_{t} =\int_{0}^{t}\mathbf{1}_{\left( -\infty,0 \right) }\left(X_{s}\right) \mathrm{d}s$ is the occupation time in the red of $X$ which represents the amount of time that $X$ stays in the negative half-line. 
Clearly, for fixed values $t$, we have
$$
\left\{ \rho_{r,l}  \geq t  \right\} = \left\{ \mathcal{O}^{X}_{t} \leq l , \Bar{U}_t \leq r \right\},$$
and consequently
$$\p( \rho_{r,l} = \infty  ) = \p(  \mathcal{O}^{X}_{\infty } \leq l ,  \Bar{U}_\infty \leq r ).$$
As the above probability is closely related to the joint distribution of $\mathcal{O}^{X}_{\infty } $ and $ \bar{U}_\infty $, we will now study the latter for bounded variation surplus process $X$. First, we know that $\mathcal{O}^{X}_{ \infty }$ can be decomposed as
$$\mathcal{O}^{X}_{\infty}=\sum_{n=1}^{N_\infty}D_n .$$ Thus, we need the joint distribution of the sum and maximum of $D_1, D_2, ... ,D_{N_\infty} $. \\ We have
\begin{align*}
F_N (l,r)=\p ( \mathcal{O}^{U}_{\infty} \leq l,\bar{U}_\infty \leq r)
=\p(\tau^-_0 = \infty)+ \sum_{n=1}^{\infty }  \p(N_\infty =n) F_n (l,r),
\end{align*}
where $F_n (l,r) =\p (\sum_{i=1}^{n}  D_i \leq l,D_{(n)} \leq r)$ is the joint
cumulative distribution function with initial condition $F_1 (l,r)=F_{D_{1}}(\min(l,r) ) $. Using Eq. $(2.8)$ in \cite{qeadan2012joint} (see also Theorem $2.7$ in \cite{efrem2023recurrence}), we have the following recursive relation of the joint pdf
\begin{align}\label{recursivepdf}
f_n (l,r) =  \frac{\partial^2 }{\partial l \partial r} F_n (l,r)  = n f_{D_1} (r) \int^{\min(l-r,r ) }_{\frac{l-r}{n-1}} f_{n-1} (l-r,t) \md t ,   &
\end{align}
for all $(l,r)\in   \left\{ (l,r)\in \reals_+^2  : \frac{l}{n}\leq r\leq l \right\} $ with initial condition
$$f_0 (l,r) =f_{D_1} (l) \delta_0 (l-r) =  f_{D_1} (r) \delta_0 (l-r). $$
\subsubsection{Parisian ruin under a peak-to-sum constraint}
In the following type of Parisian ruin, we consider the peak-to-sum ratio of negative excursions, that is,
$$S_t =\frac{\Bar{U}_t}{ \mathcal{O}^{X}_{t}} \text{   on } \tau^-_0 <\infty , $$
which represents the proportion of the longest duration wrt the total duration of all distress events. Thus, for $\alpha,r>0$, we define the following ruin time
$$
\zeta_{\alpha, r }= \inf \left\lbrace t > 0 \colon S_t > \alpha \text{ and } U_t >r  \right\rbrace .
$$
Under the above Parisian ruin time, we monitor negative excursions with length greater than $r$ and compare the peak-to-sum $S_t$ to a fixed parameter $\alpha >0$. \\
For fixed $t$, we obtain
$$\p(\zeta_{\alpha, r}  \leq t ) = \p\left( S_{t} > \alpha, \Bar{U}_t >r \right),$$
and
$$\p( \zeta_{\alpha, r }< \infty  ) =\p\left( S_{\infty} > \alpha ,  \Bar{U}_\infty >r \right)=  \p( \Bar{U}_\infty >  \alpha \mathcal{O}^{X}_{\infty}  , \Bar{U}_\infty > r ).$$
In order to compute the above probability, we define the peak-to-sum ratio as
$$
R_n =  \frac{D_{(n)}}{\sum_{i=1}^{n} D_i}. $$
Then,
$$\p( R_n >\alpha , D_{(n)} >r )=\int^{ \infty }_{r}  \int^{y/\alpha }_{0} f_n (y,z) \md y \md z,     $$
where $f_n $ is given in \eqref{recursivepdf} and consequently
$$ \p( \zeta_{\alpha, r} < \infty  ) =  \sum_{n=1}^{\infty }  \p( R_n > \alpha , D_{(n)}>r )  \p(N_\infty =n). $$
\subsection{Stochastic ordering of Parisian ruin probabilities}
We recall that $X$ is smaller than $Y$ in the stochastic dominance order, which is denoted by $X\preceq _{st}\tilde{X}$, if
\begin{equation}\label{str1}
\bar{F}_{X}(u) \leq \bar{F}_{\tilde{X}}(u) , \; \text{for all $u$,}
\end{equation}
where $\bar{F}_{X}(u)=1-F_{X}(u)$.
In particular, if $N$ and $\tilde{N}$ are discrete random variables taking on values in $\mathbb{N}$, then $N\preceq _{st}\tilde{N}$ if and only if
\begin{equation}\label{discorder}
\sum_{i=0}^{n}\mathbb{P}\left(N  =i\right) \geq \sum_{i=0}^{n}\mathbb{P}\left( \tilde{N}=i\right) .
\end{equation}
Let $\varphi _1$ and $\varphi _1$  be two increasing functions that satisfy $\varphi _1 (x) \leq \varphi _2 (x)$ for all $x \in \reals$. Then $$\varphi _1 (X) \preceq _{st} \varphi _2 (X) ,$$
and if $X \preceq _{st} Y $,
\begin{equation}\label{str2}
\varphi _1 (X) \preceq _{st} \varphi _2 (Y).\end{equation}
Finally, if $X=\left\lbrace X_t , t \geq 0 \right\rbrace$ and $\tilde{X}=\left\lbrace \tilde{X}_t , t \geq 0 \right\rbrace$ are stochastic processes, then we write $X\preceq_{st}\tilde{X}$ if, for each $t \geq 0$, we have
$$
X_{t}\preceq_{st}\tilde{X}_{t}.
$$

We consider
two Cram\'er-Lundberg risk processes $X$ and $\tilde{X}$ as defined in~\eqref{CL} with the same safety loading. For any stopping time  $\tau$ for $X$ by $\tilde{\tau}$ we will denote its counterpart for $\tilde{X}$.

Under the stochastic dominance ordering
 of the claim sizes
 \begin{equation}\label{STclaimsizes}
 C_i \preceq _{st}\tilde{C}_i,\qquad i=1,2,\ldots
 \end{equation}
 the comparison of ruin probabilities has been solved using the Pollaczeck–Khinchine formula
(see Theorem $5.4.4$ in \cite{MR16802} for details) giving
\begin{equation}\label{orderuin}
\p_{x}\left(\tau^-_0 <\infty \right)  \leq \p_{x}\left(\tilde{\tau}^-_0<\infty \right).
\end{equation}
The comparison of finite-time ruin probabilities has also been proved in \cite{lefevre2017some}.

We will now prove the following result.
\begin{prop}\label{properties}
For $r>0$ and $x\in \reals$, if \eqref{STclaimsizes} holds true,
then $\bar{U}_{\infty } \preceq_{st} \bar{\tilde{U}}_{\infty }$, and consequently
\begin{equation}\label{monotonicity}
\p_{x}\left(\kappa_{r}<\infty \right) \leq \p_{x}\left(\tilde{ \kappa}_{r}<\infty \right) .
\end{equation}
\end{prop}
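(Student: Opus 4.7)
The plan is to prove the stochastic ordering by an explicit pathwise coupling which upgrades the hypothesis \eqref{STclaimsizes} to an almost sure inequality between the two surplus processes, and then to read off the ordering of $\bar U_\infty$ (and hence of the Parisian ruin time) directly from the sample paths.

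First, by Strassen's theorem applied componentwise, I would construct on a common probability space an i.i.d.\ sequence of pairs $(C_i',\tilde C_i')$ with $C_i' \stackrel{d}{=} C_i$, $\tilde C_i' \stackrel{d}{=} \tilde C_i$, and $C_i' \leq \tilde C_i'$ almost surely for every $i \geq 1$. Using a common Poisson process $\{N_t\}$ and a common drift $c$ (which the same-safety-loading framework together with \eqref{STclaimsizes} allows one to assume by matching the fluctuation parameters), set
\begin{equation*}
X_t' = x + c\, t - \sum_{i=1}^{N_t} C_i', \qquad \tilde X_t' = x + c\, t - \sum_{i=1}^{N_t} \tilde C_i'.
\end{equation*}
Then $X' \stackrel{d}{=} X$ and $\tilde X' \stackrel{d}{=} \tilde X$ under $\p_x$, and by construction $X_t' \geq \tilde X_t'$ for every $t \geq 0$ almost surely.

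Second, the pathwise inequality transfers to the longest negative excursion. Indeed, whenever $\tilde X_s' \geq 0$ one automatically has $X_s' \geq 0$, so $\{s : \tilde X_s' \geq 0\} \subseteq \{s : X_s' \geq 0\}$. Taking suprema over $s \leq t$ yields $\tilde g_t' \leq g_t'$ and hence $U_t' = t - g_t' \leq t - \tilde g_t' = \tilde U_t'$ for every $t$. Running the supremum in $t$ gives $\bar U_\infty' \leq \bar{\tilde U}_\infty'$ almost surely, so $\p_x(\bar U_\infty > r) \leq \p_x(\bar{\tilde U}_\infty > r)$ for every $r \geq 0$; this is exactly the asserted dominance $\bar U_\infty \preceq_{st} \bar{\tilde U}_\infty$.

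The Parisian-ruin inequality \eqref{monotonicity} is then immediate: evaluating the stochastic ordering at the delay $r$ and invoking the identification \eqref{SNLPPr2}, namely $\p_x(\kappa_r < \infty) = \p_x(\bar U_\infty > r)$, produces $\p_x(\kappa_r < \infty) \leq \p_x(\tilde \kappa_r < \infty)$. The main obstacle is making the coupling rigorous under the ``same safety loading'' assumption: if one insisted on different premium rates or different Poisson intensities for $X$ and $\tilde X$, the simple common-drift/common-Poisson construction above would have to be replaced by a thinning-and-superposition argument for the Poisson processes combined with a comparison of drifts. Within the standard framework where only the claim-size distribution varies, however, the coupling is elementary and the remainder of the argument reduces to the sample-path monotonicity step above.
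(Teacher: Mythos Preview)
Your coupling argument is correct and takes a genuinely different route from the paper. The paper proceeds via the binomial-expansion representation $\bar U_\infty = D_{(N_\infty)}$: it first shows $N_\infty \preceq_{st} \tilde N_\infty$ from the explicit geometric-type law \eqref{distN} combined with the ruin-probability ordering \eqref{orderuin}; then it orders the individual excursion lengths $D_i \preceq_{st} \tilde D_i$ by invoking a comparison of deficits at ruin ($\tilde X_{\tilde\tau_0^-} \preceq_{st} X_{\tau_0^-}$, via a cited result) together with $\tau_0^+ \preceq_{st} \tilde\tau_0^+$ for negative starting points; finally it applies an order-statistic comparison to pass from $D_{(n)} \preceq_{st} \tilde D_{(n)}$ and $N_\infty \preceq_{st} \tilde N_\infty$ to $D_{(N_\infty)} \preceq_{st} \tilde D_{(\tilde N_\infty)}$.

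Your pathwise approach is both shorter and stronger: it yields $U_t' \leq \tilde U_t'$ almost surely for every $t$ on the coupled space, so the entire process $\bar U$ is dominated, not just its terminal value, and it avoids any appeal to external results on deficits or order statistics. What the paper's route buys is consistency with its overall programme of showcasing the binomial expansion, and it is purely distributional, so it does not hinge on mounting both compound Poisson components on a shared clock and drift. Your one soft spot is the parenthetical about ``matching the fluctuation parameters'': under the natural reading where $c$ and $\eta$ are common to $X$ and $\tilde X$ the coupling is immediate, but if ``same safety loading'' were read more loosely you would indeed need the thinning/drift adjustment you flag at the end.
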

\begin{proof}
First, we need to prove that $N_\infty \preceq_{st} \tilde{N}_\infty $. Using \eqref{distN}, we obtain
\begin{eqnarray*}
\sum_{i=0}^{n}\mathbb{P}_{x}\left( N_\infty =i\right)  &=&\mathbb{P}_{x}\left( \tau
_{0}^{-}=\infty \right) +\mathbb{P}_{x}\left( \tau _{0}^{-}<\infty \right)
\mathbb{P}\left( \tau _{0}^{-}=\infty \right) \sum_{i=1}^{n} \mathbb{P}%
\left( \tau _{0}^{-}<\infty \right)  ^{i-1}  \\
&=&1-\mathbb{P}_{x}\left( \tau _{0}^{-}<\infty \right) \mathbb{P}\left( \tau
_{0}^{-}<\infty \right) ^{n}.
\end{eqnarray*}
Hence, using \eqref{orderuin}, we obtain
\begin{equation}\label{orderN}
\sum_{i=0}^{n}\mathbb{P}_{x}\left(N_\infty  =i\right) \geq \sum_{i=0}^{n}\mathbb{P}%
_{x}\left( \tilde{N}_\infty =i\right),
\end{equation}
and $N_\infty \preceq_{st} \tilde{N}_\infty $ follows from \eqref{discorder}. \\
For $y\leq 0$, we have $\p_{y}\left(\tau^+_0 <r\right)  \geq \p_{y}\left(\tilde{\tau}^+_0<r \right),
$ as $  \tau^+_0 \preceq_{st} \tilde{\tau} ^+_0   $, see the discussion below Theorem 6.B.34 in \cite{shaked_Shanthikumar2007}. Moreover,
using Theorem $1$ in \cite{MR1664021} together with \eqref{orderuin} and \eqref{STclaimsizes}, we have $\Tilde{X} _{\Tilde{ \tau}^-_0}   \preceq_{st}  X_{\tau^-_0}   $
and consequently $$ \e\left[ \p_{X_{\tau^-_0}}\left(\hat{\tau}^+_0 <r\right)\right]  \geq
\e \left[ \p_{_{\tilde{X} _{\Tilde{ \tau}^-_0}}}\left(\tilde{\hat{\tau}}^+_0<r \right)\right] , $$
which follows from \eqref{str2}. Combining the above inequality with \eqref{D1} and \eqref{orderuin}, we deduce
\begin{equation*}
\mathbb{P}_{x}\left(D_{i}<r  \right) \geq \mathbb{P}_{x}\left( \tilde{D}_{i}<r \right),
\end{equation*}
and thus $D_{i} \preceq_{st} \tilde{D}_i $ for $i=1,...,n$. Consequently, using Theorem $4.1$ in \cite{balakrishnan1998handbook}, we have
\begin{equation}\label{orderstat}
 D_{(n)} \preceq_{st} \tilde{D}_{(n)},
 \end{equation}
and combined  $N_\infty \preceq_{st} \tilde{N}_\infty $, we have
\begin{equation*}
\p_{x}\left( D_{(N_\infty)} <r\right)  \geq \p_{x}\left(\tilde{D}_{(\Tilde{N}_\infty)} <r \right),
\end{equation*}
and the result follows from \eqref{SNLPPr2}.
\end{proof}
\subsection{On the number of near-maximum distress periods}
A near-maximum distress period is the negative excursion with length falling within a fixed duration $a$ of the current largest negative excursion. Assuming that $X$ has paths of bounded variation, for a fixed $a>0$, we define
$$
\mathcal{E}_a=\#\left\{1 \leq j \leq N_\infty : D_{j} \in\left(\bar{U}_\infty -a, \bar{U}_\infty \right]\right\},
$$
as the number of near-maximum distress periods. The above quantity is related to the work of Li and Pakes \cite{li2001number} in which the number of near-maximum insurance claims has been studied.
Using Eq. (2.5) in \cite{pakes1997number}, we have the following conditional p.g.f.
$$
\e \left[s^{\mathcal{E}_a} \mid N_\infty \right]=s N_\infty \int_{0}^{\infty}\{F_{D_ {1}} (x-a)+s[F_{D_ {1}}(x)-F_{D_ {1}}(x-a)]\}^{N_\infty-1} f_{D_ {1}}(x) \md x ,
$$
where $f_{D_ {1}}$ is the probability density function of $D_1$. Let $h(s)=\e\left[s^{N_\infty }\right]$ and $h^{\prime}(s)=\e\left[N_\infty s^{N_\infty -1}\right]$ for $0 \leq s<1$. The p.g.f. of $\mathcal{E}_a$ is then given by
$$
\e\left[s^{\mathcal{E}_a}\right]=s \int_{0}^{\infty} h^{\prime}\left( F_ {D_ {1}} (x-a)+s[F_{D_ {1}}(x)-F_{D_ {1}}(x-a)]\right)  f_{D_ {1}}(x) \md x ,
$$
and finally
$$
 \e\left[\mathcal{E}_a\right]=\int_{0}^{\infty} h^{\prime}\left( F_{D_ {1}}(x+a)\right) f_{D_ {1}}(x) \md x ,
$$
that is the expected number of near-maximum distress periods.\\
\begin{figure}[!h]
	\centering
	\subfloat{{\includegraphics[width=7cm,height=7cm]{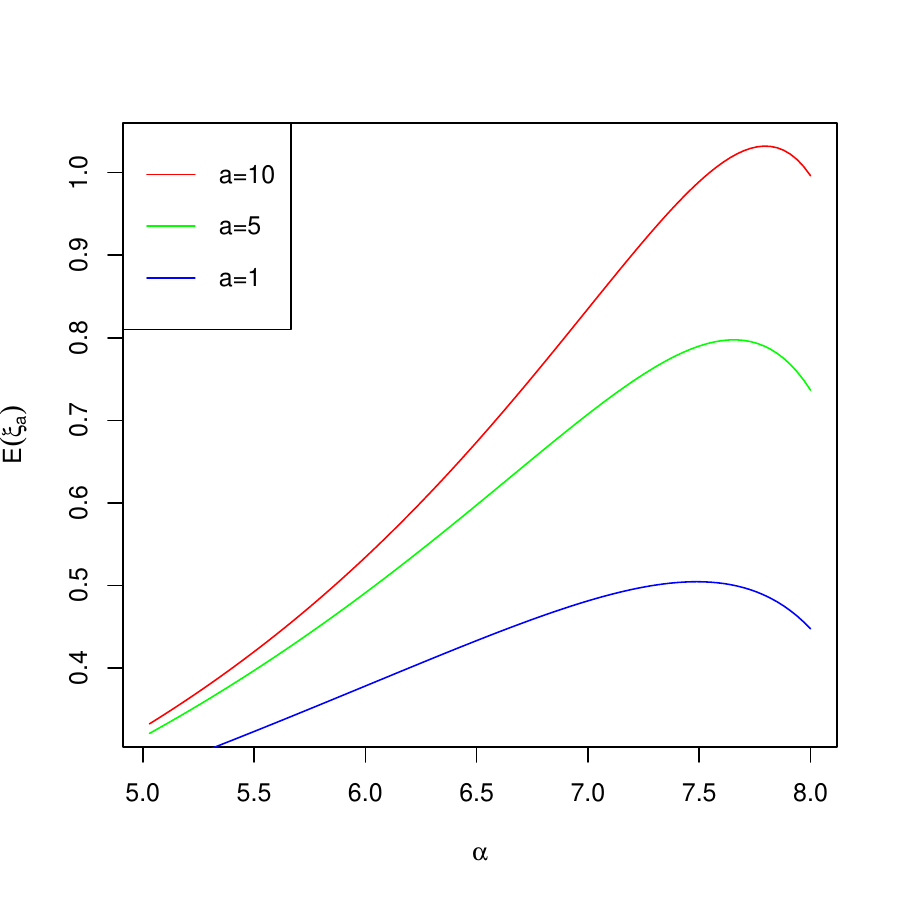} }}
	\qquad
	\subfloat{{\includegraphics[width=7cm,height=7cm]{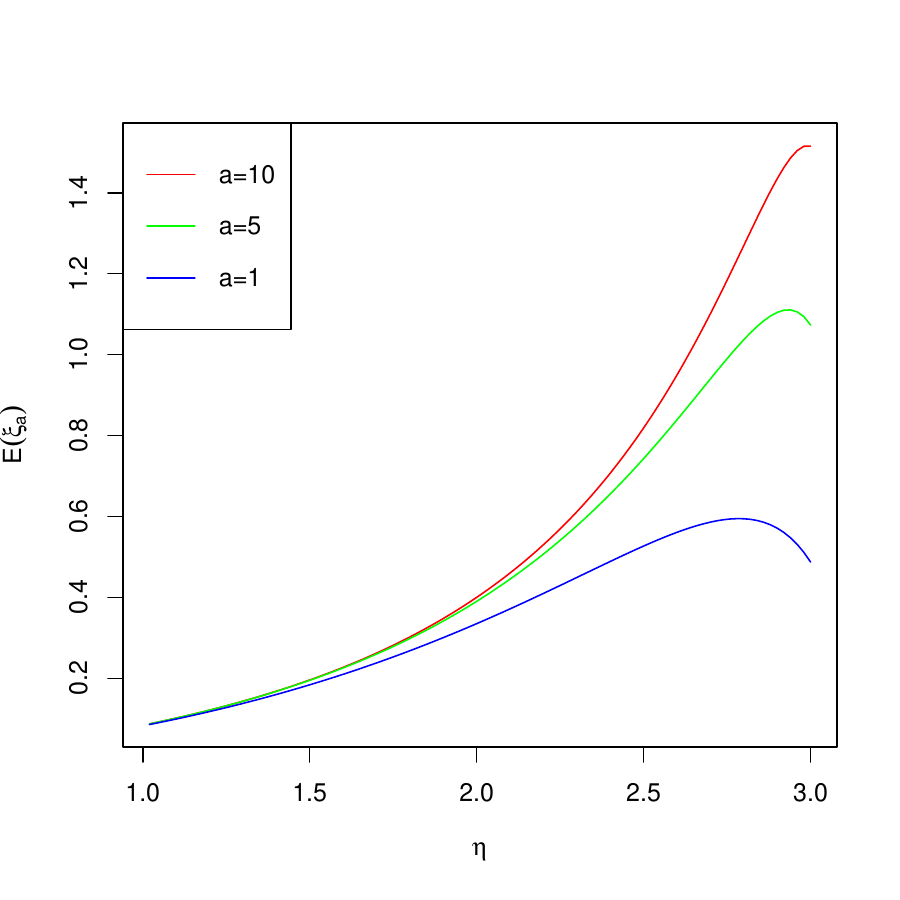} }}
 \qquad
	\subfloat{{\includegraphics[width=7cm,height=7cm]{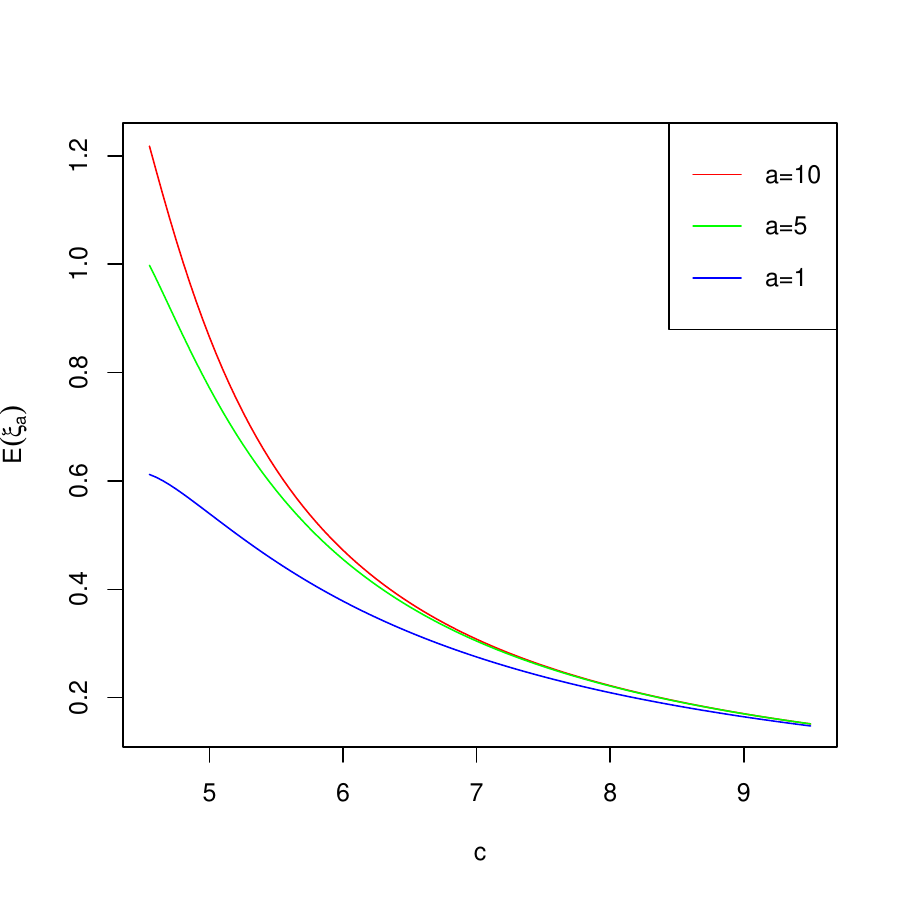} }}
	\caption{Values of  $ \e\left[\mathcal{E}_a\right]$ for the Cramér-Lundberg process with ($c=8.5$, $\eta=1$), ($c=9.5$, $\alpha=1/3$) and ($\eta=2$, $\alpha=1/2$).  }
	\label{figxi}
\end{figure}
We can now calculate the above expression for the Cram\'{e}r-Lundberg process in \eqref{CL}. In Figure \ref{figxi}, the values of $ \e\left[\mathcal{E}_a\right]$ increase as $\eta$ increases, then decrease for large values of $\eta$. This reflects the increase of the number of negative excursions when the size of the claims increases while it decreases as the process struggles to recover above $0$. The claim parameter $\alpha$ has also a similar impact on $ \e\left[\mathcal{E}_a\right]$. In the third plot, when the premium rate increases, the ruin event is less likely to occur and negative excursions are shorter which decreases the values of $ \e\left[\mathcal{E}_a\right]$. In all three plots, the values of $ \e\left[\mathcal{E}_a\right]$ increase as $a$ increases.
\section{Acknowledgements}
 M. A. Lkabous acknowledges the support from a start-up grant from the University of Southampton. The research of Zbigniew Palmowski is partially supported by Polish National Science
Centre Grant No. 2021/41/B/HS4/00599.
\bibliographystyle{alpha}
\bibliography{REFERENCES}
\end{document}